\newcommand{\e}{\varepsilon}
\newcommand{\lam}{\lambda}
\newcommand{\fy}{\varphi}
\newcommand{\p}{\partial}
\newcommand{\I}{\infty}
\newcommand{\R}{\mathbb{R}}
\newcommand{\Z}{\mathbb{Z}}
\newcommand{\calS}{\mathcal{S}}
\newcommand{\rad}{{\operatorname{radial}}}
\newcommand{\Sg}{\mathfrak{S}}
\newcommand{\calL}{\mathcal{L}}
\newcommand{\mm}{\mathfrak{m}}
\numberwithin{equation}{section}
\newtheorem{thm}{Theorem}[section]
\newtheorem{cor}[thm]{Corollary}
\newtheorem{lem}[thm]{Lemma}
\newtheorem{prop}[thm]{Proposition}
\theoremstyle{remark}
\newcommand{\Ext}{E_{\operatorname{ext}}}
\newcommand{\ran}{\rangle}
\newcommand{\lan}{\langle}
\newcommand{\lec}{\lesssim}
\newcommand{\gec}{\gtrsim}
\newcommand{\EQ}[1]{\begin{equation}  \begin{split} #1 \end{split} \end{equation} }
\newcommand{\CAS}[1]{\begin{cases} #1 \end{cases}}
\newcommand{\pt}{&}
\newcommand{\pr}{\\ &}
\newcommand{\pq}{\quad}
\newcommand{\pn}{}
\newcommand{\LR}[1]{{\lan #1 \ran}}
\newcommand{\de}{\delta}
\newcommand{\ta}{\tau}
\newcommand{\ka}{\kappa}
\newcommand{\ga}{\gamma}
\newcommand{\s}{\sigma}
\newcommand{\rh}{\rho}
\newcommand{\na}{\nabla}
\newcommand{\supp}{\operatorname{supp}}
\newcommand{\B}{\mathcal{B}}
\newcommand{\De}{\Delta}
\newcommand{\IN}[1]{\text{ in }#1}
\newcommand{\sg}{\mathfrak{s}}
\newcommand{\HH}{\mathcal{H}}
\newcommand{\V}{\vec}
\newcommand{\La}{\Lambda}
\newcommand{\dist}{\mathrm{dist}}
\newcommand{\eps}{\varepsilon}
\newcommand{\sign}{\mathrm{sign}}
\newcommand{\spec}{\mathrm{spec}}
\begin{document}

\author{J.~Krieger}
\address{Department of Mathematics, The University of Pennsylvania, 209 South 33rd Street, Philadelphia, PA 19104, U.S.A.}

\author{K.~Nakanishi}
\address{Department of Mathematics, Kyoto University, Kyoto 606-8502, Japan}

\author{W.~Schlag}
\address{Department of Mathematics, The University of Chicago, 5734 South University Avenue, Chicago, IL 60615, U.S.A.}

\thanks{The first author wishes to thank the Mathematics Department of the University of Chicago for its hospitality in July 2010, when part of this work was done.  Support of the National Science Foundation, DMS-0757278 (JK) and  DMS-0617854 (WS)  as well as the Sloan Foundation (JK) is gratefully acknowledged. The third author was partially supported by a Guggenheim fellowship.}

\title[Global dynamics for the critical wave equation]{Global dynamics away from the ground state \\ for the energy-critical nonlinear wave equation}

\begin{abstract}
We study global behavior of radial solutions for the nonlinear wave equation with the focusing energy critical nonlinearity in three and five space dimensions. 
Assuming that the solution has energy at most slightly more than the ground states and gets away from them in the energy space, we can classify its behavior into four cases, according to whether it blows up in finite time or scatters to zero, in forward or backward time direction. 
We prove that initial data for each case constitute a non-empty open set in the energy space. 

This is an extension of the recent results \cite{NakS,NakS2} by the latter two authors on the subcritical nonlinear Klein-Gordon and Schr\"odinger equations, except for the part of the center manifolds. 
The key step is to prove the ``one-pass" theorem, which states that the transition from the scattering region to the blow-up region can take place at most once along each trajectory. 
The main new ingredients are the control of the scaling parameter and the blow-up characterization by Duyckaerts, Kenig and Merle \cite{DKM1,DKM2}. 
\end{abstract}

\maketitle 

\tableofcontents

\section{Introduction}
We consider the $H^1$-critical, focusing  nonlinear wave equation 
\EQ{ \label{eq:H1critical}
 \ddot u - \De u = |u|^{2^*-2}u,\pq u(t,\,x):\R^{1+d} \to \R, \pq 2^*=\frac{2d}{d-2}\pq (d= 3 \text{ or }5), 
}
in the radial context, where $2^*$ denotes the $H^1$ Sobolev critical exponent. We remark that the dimensional restriction is needed only for using the  blow-up characterization by Duyckaerts-Kenig-Merle~\cite{DKM2}. 

We take the radial energy space as the phase space for the above equation, which can be normalized to $L^2$ by putting 
\EQ{
 \vec u:=(|\na| u,\dot u) \in L^2_\rad(\R^d)^2=:\HH,}
at each time $t\in\R$, where $|\na|=\sqrt{-\De}$ is an isometry from $\dot H^1_\rad(\R^d)$ onto $L^2_\rad(\R^d)$. 
Thus, to any scalar space-time function $u(t,x)$, we will associate the vector function $\vec u(t,x)$ by the above relation. Conversely, for any time independent $\vec\fy=(\fy_1,\fy_2)\in\HH$, we introduce the following notation 
\EQ{
 \fy:=|\na|^{-1}\fy_1, \pq \dot\fy:=\fy_2.}

The conserved energy of \eqref{eq:H1critical} is denoted by 
\EQ{
 E(\vec u):=\int_{\R^d} \Bigl[\frac{|\dot u|^2+|\na u|^2}{2}-\frac{|u|^{2^*}}{2^*}\Bigr] dx.}
It is well-known that this problem admits the static Aubin solutions of the form 
\EQ{
 W_{\lambda} = T_\lam W,\pq W(x) = \left[1+\frac{|x|^2}{d(d-2)}\right]^{1-\frac{d}{2}},}
where $T_\lam$ denotes the $\dot H^1$ preserving dilation 
\EQ{
 T_\lam\fy=\lam^{d/2-1}\fy(\lam x).}
These are positive radial solutions of the static equation
\EQ{
 -\Delta W -|W|^{2^*-2}W=0,} 
which are unique, up to dilation and translation symmetries, amongst the non-negative, non-zero (not necessarily radial) $C^{2}$ solutions, see~\cite{CGS}. They also minimize the static energy 
\EQ{
 J(\fy) := \int_{\R^{d}} \Bigl[ \frac12 |\nabla \fy|^{2} - \frac{1}{2^*} |\fy|^{{2^*}} \Bigr] dx,}
among all non-trivial static solutions. 
The work of Kenig, Merle~\cite{KM1,KM2} and Duyckaerts, Merle~\cite{DM1,DM2} allows for a characterization of the global-in-time behavior of solutions with $E(\vec u)\leq J(W)$. 

In this paper we  study the behavior of solutions with 
\EQ{ \label{eq:energycond}
 E(\vec u)<J(W)+\eps_0^{2},
}
for some small $\eps_0>0$. 
Solutions of subcritical focusing NLKG and NLS equations with radial data in~$\R^{3}$ of energy slightly above that of the ground state were studied by the latter two authors in~\cite{NakS,NakS2}. 
Our goal in this paper is to extend those results to the critical case. 
The key feature of \eqref{eq:H1critical} by contrast to NLKG is the scaling invariance of~\eqref{eq:H1critical} manifested by 
\EQ{ 
 u(t, x) \mapsto \lambda^{\frac{d}{2}-1}u(\lambda t, \lambda x) = T_\lam u(\lam t) 
}
which leaves the energy unchanged. 
In particular, the analogue of the ``one pass theorem'' proved in \cite{NakS} needs to be modified, specifically by replacing the discrete set of attractors $\{Q, -Q\}$ there by the one--parameter family of the ground states 
\EQ{
 \calS: = \{W_\lambda\}_{\lambda>0}.}
Note that in the subcritical NLS case \cite{NakS2}, the scaling parameter $\lam$ (in frequency) is essentially fixed or at least bounded from above and below by the $L^2$ conservation law, but in the critical case there is no factor which a priori prevents the scale from going to $0$ or $+\I$. 

Introduce the ``virial functional''
\EQ{
 K(\fy): = \int_{\R^d}[|\nabla\fy|^2 - |\fy|^{2^*}]\,dx} 
 and note that $K(W)=0$. 
The following positivity is crucial for the variational structure around $W$
\EQ{
 H(\fy) := \|\na\fy\|_2^2/d = J(\fy)-K(\fy)/2^*.}
Note that the derivative of $J(\fy)$ with respect to any scaling $\fy(x)\mapsto \lam^a\fy(\lam^bx)$ except for $T_\lam$ gives a non-zero constant multiple of $K(\fy)$. This is a special feature of the scaling critical case, which allows us to work with a single $K$, whereas in the subcritical case \cite{NakS} we needed two different functionals and their equivalence.  
 
The main result of this paper is summarized as follows. 
\begin{thm}\label{thm: Main} 
There exist a small $\e_*>0$, a neighborhood $\B$ of $\V\calS$ within $O(\e_*)$ distance in $\HH$, and a continuous functional 
\EQ{
 \Sg:\{\V\fy\in\HH\setminus\B \mid E(\V\fy)<J(W)+\e_*^2\} \to \{\pm 1\},}such that the following properties hold: For any solution $u$ with $E(\V u)<J(W)+\e_*^2$ on the maximal existence interval $I(u)$, let 
\EQ{
 \pt I_0(u):=\{t\in I(u)\mid \V u(t)\in \B\}, 
 \pr I_\pm(u) := \{t\in I(u) \mid \V u(t)\not\in\B,\ \Sg(\V u(t))=\pm 1\}.}
Then $I_0(u)$ is an interval, $I_+(u)$ consists of at most two infinite intervals, and $I_-(u)$ consists of at most two finite intervals. $u(t)$ scatters to $0$ as $t\to\pm\I$ if and only if $\pm t\in I_+(u)$ for large $t>0$. Moreover, there is a uniform bound $M<\I$ such that 
\EQ{
 \|u\|_{L^q_{t,x}(I_+(u)\times\R^d)}\le M, \pq q:=\frac{2(d+1)}{d-2}.}
For each $\s_1,\s_2\in\{\pm\}$, let $A_{\s_1,\s_2}$ be the collection of initial data $\V u(0)\in\HH$ such that $E(\V u)<J(W)+\e_*^2$, and for some $T_-<0<T_+$, 
\EQ{
 (-\I,T_-)\cap I(u) \subset I_{\s_1}(u), \pq (T_+,\I)\cap I(u) \subset I_{\s_2}(u).}
Then each of the four sets $A_{\pm,\pm}$ is open and non-empty, exhibiting all possible combinations of scattering to zero/finite time blowup as $t\to\pm\I$, respectively. 
\end{thm}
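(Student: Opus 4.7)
The plan is to reduce the theorem to a trichotomy on the exterior region $\HH\setminus\B$, controlled by a sign-valued ``ejection'' functional, and then to prove that each sign corresponds to an irreversible dynamical fate (scattering or finite-time blow-up). First I would set up the variational structure: near $\V\calS$ and for energy below $J(W)+\e_*^2$, the exterior $\HH\setminus\B$ splits cleanly into the two regions $\{K(u)>0\}$ and $\{K(u)<0\}$ (with a uniform lower bound $|K(u)|\gec\e_*^{2}$ or so on each by a small-neighborhood-of-zero argument, using $H(\fy)=J(\fy)-K(\fy)/2^{*}$ and coercivity away from $\V\calS$). One defines $\Sg(\V\fy):=\sign K(\fy)$ on that exterior; continuity is clear away from $K=0$ once one shows $K=0$ cannot happen outside $\B$ at this energy level. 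Thus $I_{+}(u)$ and $I_{-}(u)$ correspond exactly to times when $K(u(t))>0$ and $K(u(t))<0$, respectively.

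The heart of the argument is the \emph{one-pass theorem}, namely that $I_{0}(u)$ is a single interval. Here one argues by contradiction: assume two disjoint exterior excursions separated by a visit to $\B$. Combine an ``ejection lemma'' near $\V\calS$ (linearizing about $W_{\lam}$, identifying the one-dimensional unstable/stable modes, and showing that once the solution leaves $\B$ it does so monotonically and with a definite sign of $K$) with a localized virial/Morawetz identity of the form $\frac{d}{dt}\la \V u, A(t)\V u\ra \gec |K(u)|$, where $A(t)$ is a suitably cut-off dilation generator. The critical new difficulty, absent from \cite{NakS,NakS2}, is that the scaling $\lam(t)$ of the ``closest ground state'' can wander; so the virial cutoff radius must be tied dynamically to $\lam(t)$. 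One must therefore show, using energy and momentum conservation together with the ejection lemma and finite speed of propagation, that during a hypothetical ``hesitation'' near $\V\calS$ the scale $\lam(t)$ cannot jump, and that the virial quantity $\la\V u,A\V u\ra$ has at most $o(1)$ boundary error across each excursion. Integrating the virial over the full disputed interval then contradicts boundedness of the weighted quantity. This is the main obstacle; here the Duyckaerts--Kenig--Merle classification \cite{DKM1,DKM2} is used to rule out the only remaining degenerate case, namely a trajectory trapped by $\calS$ for all positive (or negative) time that still escapes $\B$, since such a soliton-resolution-type object would have to coincide with a modulated $W_{\lam}$.

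Once the one-pass theorem is established, the dynamics on each exterior sign-region is handled by the by-now standard dichotomy. On $\{K>0\}$, a uniform lower bound $K(u(t))\gec \e_{*}^{2}$ together with $E<J(W)+\e_{*}^{2}$ gives coercivity of the energy, and a Kenig--Merle concentration-compactness / rigidity argument (adapted to the ``slightly supercritical'' energy level via the one-pass theorem to exclude the soliton-like critical element) yields scattering as $t\to+\I$ (or $-\I$), together with a uniform $L^{q}_{t,x}$ bound $M$ on $I_{+}(u)$. On $\{K<0\}$, the convexity argument $\ddot{\|u\|_{2}^{2}}\sim -K(u)\gec \e_{*}^{2}$, or equivalently the blow-up criterion of \cite{DKM2}, forces finite-time blow-up in the corresponding direction. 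This simultaneously shows $I_{+}(u)$ consists of at most two infinite intervals and $I_{-}(u)$ of at most two finite intervals, as well as the scattering characterization.

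Finally, openness of $A_{\s_{1},\s_{2}}$ follows from continuous dependence in $\HH$ on bounded scattering-norm intervals and from the quantitative version of the ejection lemma (small perturbations either remain in $\B$ or exit with the same sign of $K$). Non-emptiness is obtained constructively: the set $A_{+,+}$ contains small data by Kenig--Merle, and $A_{-,-}$ is populated by negative-$K$ data with energy below $J(W)$ that blow up both ways (e.g.\ time-reversible data like $(\lam W,0)$ with $\lam>1$ slightly). For the mixed cases $A_{\pm,\mp}$ one uses a connectedness/intermediate-value argument: along a one-parameter family $\V\fy_{\al}$ connecting a $(+,+)$ datum to a $(-,-)$ datum and passing through a neighborhood of $\V W$, the backward- and forward-fates change sign at (a priori different) parameters, and openness plus the one-pass theorem force the existence of parameters realizing each of $(+,-)$ and $(-,+)$. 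This mirrors the construction in \cite{NakS,NakS2}, with the scaling parameter handled by the ejection lemma produced in the proof of the one-pass theorem.
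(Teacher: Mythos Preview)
Your outline captures the overall architecture (ejection lemma, one-pass theorem, sign dichotomy, Kenig--Merle scheme), but several key mechanisms are misidentified.

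\textbf{Placement of the Duyckaerts--Kenig--Merle input.} You invoke \cite{DKM1,DKM2} inside the one-pass theorem to rule out a ``trapped'' scenario. In the paper the one-pass theorem is proved \emph{without} DKM: it is a self-contained virial/hyperbolic argument. The DKM classification is used only afterward, in the $\Sg=+1$ region, to show that no type-II (bounded-energy) blow-up can occur there; without it one cannot even assert that solutions with $\Sg=+1$ are global, let alone run the Kenig--Merle machine. Your proposal never confronts this concentration obstruction in the scattering step, which is the genuine new difficulty over the subcritical case.

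\textbf{The sign functional and the degeneration $\|\na u\|_2\to 0$.} Defining $\Sg=\sign K$ globally on the exterior does not work: close to $\pm\V\calS$ (but still outside $\B$, which has radius $O(\e_*)$) the variational lemma only yields $K\ge\min(\kappa(\de),c\|\na\fy\|_2^2)$, and $K$ can be arbitrarily small. The paper defines $\Sg$ by $-\sign\mu_\calS$ (the unstable eigenmode) in this inner annulus and by $\sign K$ further out, checking consistency via the ejection lemma. Relatedly, your claimed uniform bound $|K(u)|\gec\e_*^2$ is false on the $K>0$ side: second-order-in-time dynamics allow $\|\na u(t)\|_2\to 0$ along the flow. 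The paper closes the virial contradiction in this regime by a separate localized \emph{energy-equipartition} identity $\p_t\LR{w\dot u|u}=\|\dot u\|_2^2-K(u)+O(\de)$, which your sketch omits entirely.

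\textbf{Handling of the scaling in the virial.} You propose tying the virial cutoff radius dynamically to $\lam(t)$. The paper does not do this; it uses fixed light-cone cutoffs anchored at the two endpoint times $T_2,T_3$ with radii $\sim\de^{-1}$. The scaling parameter is controlled only through the ejection lemma, which shows $\lam(u(t))$ is essentially frozen on each hyperbolic interval; this is what makes the exterior-energy error $O(\de)$ uniformly.

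\textbf{Non-emptiness of $A_{\pm,\mp}$.} Your intermediate-value argument along a path through $\V W$ may be workable, but the paper gives a direct and simpler construction: data $u(0)=W+a\rho+f$, $\dot u(0)=b\rho+g$ with $\|\na f\|_2+\|g\|_2\ll|a|+|b|\ll\de_*$, where the independent signs of the growing modes $\mu_\pm=\tfrac12(a\pm b/k)$ select the forward/backward ejection signs explicitly.
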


The neighborhood $\B$ as well as the sign functional $\Sg$ will be defined explicitly, cf.~Corollary \ref{cor: onepass}. 
In short, every solution $u$ with energy $E(\V u)<J(W)+\e_*^2$ can change the sign $\Sg(\V u(t))$ at most once, by entering the neighborhood $\B$, 
and $u$ scatters/blows-up if it keeps $\Sg=+1$ / $-1$. 
This is the same description as in the subcritical case \cite{NakS} concerning the dynamics away from the ground states $\calS$. 
Indeed, the part about the sign change seems fairly general, which we called ``one-pass" theorem, relying only on the energy
 and virial type arguments. It will be proved separately as the first step in Theorem \ref{thm: onepass}. 
 
However, we do not know at this time how to deal with solutions $u$ which stay in $\B$.  
Note that the solutions constructed in~\cite{KST} in the three-dimensional case belong to this tube. 
Moreover, Duyckaerts, Kenig, Merle~\cite{DKM1} showed that all type-II blowup (i.e.,~blowup with bounded energy norm) under the constraint~\eqref{eq:energycond} is of the form of those solutions found in~\cite{KST}. 
But the tube around~$\calS$ might also contain solutions which do not blow up but rather scatter to~$\calS$. 
This would correspond to the center-stable manifolds in~\cite{NakS,NakS2}. 
However, in contrast to~\cite{NakS,NakS2} we do not address the issue of existence of a center-stable manifold associated with~\eqref{eq:H1critical},
nor do we give a complete description of all possible dynamics for solutions as in~\eqref{eq:energycond}. 
Recall that~\cite{KS} establishes the existence of such a manifold for the radial three-dimensional critical wave equation, but not in the energy
class. It appears to be a delicate question in any dimension to decide whether or not a center-stable manifold associated with the ground states exists 
in the case of  energy critical equations.

The key idea behind the proof is similar to the one in~\cite{NakS}, which relies on an interplay between the hyperbolic dynamics of the linearized operator around $W$ with the variational structure of $J$ and $K$ away from $\calS$. 

Dynamically speaking, the linearization around $W$ is delicate, as one needs to take a time-dependent scaling parameter $\lam(t)$ into account. This is a major difference from~\cite{NakS}. 
To address it, we use the observation that the evolution of $\lam(t)$ is much slower than that of the exponentially unstable mode. 
Indeed, the evolution of $\lambda(t)$ is governed by the threshold eigenvalue (which lies at zero energy) of the linearized operator and is therefore by nature algebraically unstable rather than exponentially unstable. 
This will allow us to freeze the dilation parameter in those time intervals during which the trajectories are dominated by the hyperbolic (and unstable) dynamics. 

The other major difference, which could be more serious, is the possibility of concentration blow-up in the region $K\ge 0$ and away from $\calS$, where the solutions are bounded and so automatically global in the subcritical case. 
This problem arises after applying the one-pass theorem. 
Fortunately, we will see that the blow-up analysis by Duyckaerts, Merle, Kenig \cite{DKM1,DKM2} precludes it, so that we can proceed essentially in the same way as in the subcritical case. 
 
\section{Energy distance functional} 

In this section we define the nonlinear distance functional to the ground state family $\calS$, by using the linearized operator, but still keeping the nonlinear structure, so that it will best reflect the hyperbolic nature around $\calS$. 
The main difference from the subcritical radial NLKG \cite{NakS} is that we need a good choice of the scaling parameter.  

Let $\rh>0$ be the unique $L^{2}$-normalized ground-state for the linearized operator \EQ{
 \calL := -\De - (2^*-1)W^{2^*-2}, \pq \calL\rh=-k^2\rh, \pq \|\rh\|_2=1.} 
Then $\rh_\lam:=T_\lam\rh$ is a ground state of the rescaled operator 
\EQ{
 \calL_\lam := -\De - (2^*-1)W^{2^*-2}_\lam, \pq \calL_\lam\rh_\lam=-k^2\lam^2\rh_\lam, \pq \|\rh_\lam\|_2=1/\lam.}

Expand $u$ around $W_\lam$ by 
\EQ{
 u = W_\lam + v_\lam \pt= W_\lam + \mu_\lam(u)\rh_\lam + \ga_\lam, \pq \ga_\lam\perp\rh_\lam,}
where $\mu_\lam$ is given by 
\EQ{
 \mu_\lam(\fy) := \LR{\fy-W_\lam|\lam^2\rh_\lam}=\LR{\fy-W_\lam|T_{1/\lam}^*\rh}.}Since $\rho\in\calS\subset\dot H^{-1}$, we obtain by rescaling 
\EQ{ 
 |\mu_\lam(u)|\lesssim \|\nabla v_\lam\|_{L^2}.} 
Note that $\ga_\lam$ may contain the root mode in the direction 
\EQ{
 \p_\lam W_\lam = \La W_\lam, \pq \La:=r\p_r+d/2-1.}
 However, this will not cause any problems in our analysis of  
 the hyperbolic dynamics. The energy is expanded as 
\EQ{ \label{energy expand}
 E(\vec u)-J(W) &= \frac 12[\|\dot u\|_2^2 + \LR{\calL_\lam v_\lam| v_\lam}]-C_\lam(v)\\
&= \frac 12[\|\dot u\|_2^2 - k^2 \mu_\lam(\fy)^2 + \LR{\calL_\lam \ga_\lam| \ga_\lam}] -C_\lam(v_\lam),
}
where $C_\lam$ denotes the superquadratic part of the energy, i.e., 
\EQ{
 C_\lam(v) \pt:= \int_{\R^d}\Bigl[\frac{|v+W_\lam|^{2^*}-|W_\lam|^{2^*}}{2^*}-W_\lam^{2^*-1}v-\frac{2^*-1}{2}W_\lam^{2^*-2}|v|^2\Bigr] dx 
 \pr=O(\|v\|_{\dot{H}^1}^3).} 
In the same way as in \cite{NakS}, we introduce an energy functional
\EQ{
 E_\lam(\vec u) \pt:=E(\vec u)-J(W)+k^2\mu_\lam(u)^2
 \pr=\frac 12[\|\dot u\|_2^2 + k^2 \mu_\lam(\fy)^2 + \LR{\calL_\lam \ga_\lam| \ga_\lam}] -C_\lam(v_\lam).}

Now we choose $\lam=\lam(u)$ for $u$ close to $\calS$ by the orthogonality condition 
\EQ{ \label{lam orth}
 \LR{u|\La^*\rh_\lam}=\LR{v_\lam|\La^*\rh_\lam}=0,}
using the fact that 
\EQ{
 \LR{W_\lam|\La^*\rh_\lam}=\LR{\La W_\lam|\rh_\lam}=0,} 
which follows from $\calL_\lam \La W_\lam=0$ and $\calL_\lam\rh_\lam=-k^2\lam^2\rh_\lam$, and $\rh\in\calS$. Such $\lam(u)$ is uniquely determined at least in the region 
\EQ{
 \|\na v_\lam\|_2 \sim \dist_{\dot H^1}(u,\calS)\ll 1,}
by the implicit function theorem, since 
\EQ{ \label{nondege orth}
 \p_{\lam=1} \LR{W|\La^*\rh_\lam}\pt=\LR{\La W|\La^*\rh}=\LR{\La W|-k^{-2}(\La(2^*-2)W^{2^*-2})\rh}
 \pr=-k^{-2}(2^*-1)(2^*-2)\LR{W^{2^*-3}(\La W)^2|\rh}<0.} 

In order to bound the remainder by the energy, we use the following result. 

\begin{lem} \label{lem:non-neg spec}
For any $\ga\in\dot H^1_{rad}$ such that $\ga\perp\rh$, we have $\LR{\calL\ga|\ga}\ge 0$ and 
\EQ{
 \|\na\ga\|_2^2 \sim \LR{\ga|\La^*\rh}^2+\LR{\calL\ga|\ga}.}
\end{lem}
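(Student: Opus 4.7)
The plan is as follows. The statement is a standard coercivity estimate for the linearized operator around the ground state, and I would combine the spectral picture of $\calL$ with a compactness/contradiction argument. First I would recall that on $L^2_\rad(\R^d)$, via stereographic projection (or a direct ODE analysis on the half-line), $\calL$ is unitarily equivalent to an operator with compact resolvent whose radial spectrum consists of exactly one simple negative eigenvalue $-k^2$ with eigenfunction $\rh$, a simple zero eigenvalue spanned by $\La W$ (reflecting the scaling invariance $\calL\La W=0$), and a strictly positive gap above. This already yields $\LR{\calL\ga|\ga}\ge 0$ for any $\ga\in\dot H^1_\rad$ with $\ga\perp\rh$, which is the first assertion.

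For the norm equivalence I would split $\ga=a\La W+\ga_\perp$ with the choice
\EQ{a:=\frac{\LR{\ga|\La^*\rh}}{\LR{\La W|\La^*\rh}},}
which is meaningful because $\LR{\La W|\La^*\rh}\ne 0$ by \eqref{nondege orth}. The remainder $\ga_\perp$ inherits $\ga_\perp\perp\rh$ from $\La W\perp\rh$ (itself obtained by pairing $\LR{\calL\La W|\rh}=0=-k^2\LR{\La W|\rh}$), and by construction satisfies $\LR{\ga_\perp|\La^*\rh}=0$. Since $\calL\La W=0$, the cross and diagonal $a$-terms drop out, leaving
\EQ{\LR{\calL\ga|\ga}=\LR{\calL\ga_\perp|\ga_\perp}, \pq \|\na\ga\|_2^2\lesssim \LR{\ga|\La^*\rh}^2+\|\na\ga_\perp\|_2^2.}
The lemma thereby reduces to the strict coercivity $\|\na\ga_\perp\|_2^2\lesssim\LR{\calL\ga_\perp|\ga_\perp}$ on the doubly-constrained subspace $X:=\{\psi\in\dot H^1_\rad\mid \psi\perp\rh,\ \LR{\psi|\La^*\rh}=0\}$.

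I would prove this coercivity by contradiction. Take $\ga_n\in X$ with $\|\na\ga_n\|_2=1$ and $\LR{\calL\ga_n|\ga_n}\to 0$, and pass to a weak $\dot H^1$ limit $\ga_\I$. Both orthogonality constraints are preserved in the limit because $\rh$ and $\La^*\rh$ lie in $\dot H^{-1}_\rad$ (by the fast decay of $\rh$). The key step is to show
\EQ{\int_{\R^d} W^{2^*-2}|\ga_n|^2\,dx\to\int_{\R^d} W^{2^*-2}|\ga_\I|^2\,dx.}
In the radial sector this is a compactness statement: $W^{2^*-2}(x)\lesssim\LR{x}^{-4}$ combined with the Strauss pointwise bound $|\ga(r)|\lesssim r^{-(d-2)/2}\|\na\ga\|_2$ for $r\ge 1$ makes the quadratic form $\ga\mapsto\int W^{2^*-2}|\ga|^2\,dx$ compact on $\dot H^1_\rad$. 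Once granted, $\LR{\calL\ga_\I|\ga_\I}\le 0$; the first assertion then forces $\ga_\I\in\mathrm{span}(\La W)$, and $\LR{\ga_\I|\La^*\rh}=0$ together with $\LR{\La W|\La^*\rh}\ne 0$ yields $\ga_\I=0$. But then compactness gives $\int W^{2^*-2}|\ga_n|^2\to 0$, so $\LR{\calL\ga_n|\ga_n}\to 1$, contradicting the minimizing assumption.

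The main obstacle is precisely the compactness of this nonlinear pairing at the critical exponent. Without radial symmetry it would fail because of non-compact dilation orbits; in the radial sector translations are eliminated, so the only residual concentration profiles are rescalings $T_{\lam_n}\fy$ with $\lam_n\to 0$ or $\I$, which are defused by the explicit scaling identity
\EQ{\int_{\R^d} W^{2^*-2}(x)|T_{\lam_n}\fy(x)|^2\,dx=\lam_n^{-2}\int_{\R^d} W(y/\lam_n)^{2^*-2}|\fy(y)|^2\,dy\to 0,}
combined with Rellich--Kondrachov compactness on bounded annuli for the local contribution.
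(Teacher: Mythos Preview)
Your proposal is correct and follows essentially the same contradiction-via-compactness strategy as the paper: both split off the kernel direction $\La W$, prove strict coercivity on the remaining doubly-constrained subspace by taking a minimizing sequence, passing to a weak limit, using compactness of the potential term to force the limit into $\ker\calL=\operatorname{span}(\La W)$, and then ruling it out by the second orthogonality. The only cosmetic difference is that the paper first transforms by $z=|\na|\ga$ so that the potential becomes the manifestly compact operator $A=|\na|^{-1}W^{2^*-2}|\na|^{-1}$ on $L^2$ and the decomposition $z=c|\na|\La W+z_+$ is $L^2$-orthogonal, whereas you stay in $\dot H^1$, prove the compactness of $\ga\mapsto\int W^{2^*-2}|\ga|^2$ by hand (radial Sobolev plus Rellich), and define the $\La W$-coefficient directly via the pairing $\LR{\cdot|\La^*\rh}$; this makes the final bound on the coefficient slightly more immediate in your version.
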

\begin{proof}
Let $z=|\na|\ga\in L^2$, then the bilinear form is rewritten 
\EQ{
 \LR{\calL\ga|\ga}=\LR{(1-A)z|z}, \pq A:=|\na|^{-1}W^{2^*-2}|\na|^{-1}.}
$A$ is a positive, compact and self-adjoint operator on $L^2$. Hence $\spec(1-A)$ is bounded, discrete, with the only accumulation point being~$1$. Since
\EQ{
 z \perp |\na|^{-1}\rh \perp |\na|\La W \in (1-A)^{-1}(0),}
we can decompose 
\EQ{
 \pt z = c|\na|\La W + z_+, \pq z_+\perp\{|\na|^{-1}\rh,|\na|\La W\},}
then  
\EQ{
  \LR{\calL\ga|\ga}=\LR{(1-A)z_+|z_+}.}
First we prove 
\EQ{ \label{1-A pos bd}
 L^2_\rad \ni z\perp\{|\na|^{-1}\rh,|\na|\La W\} \implies \LR{(1-A)z|z}\sim\|z\|_2^2,}
noting that 
\EQ{ \label{L pos}
 L^2\ni z\perp |\na|^{-1}\rh \implies \LR{(1-A)z|z}\ge 0.}
Suppose \eqref{1-A pos bd} fails. Then there exists a sequence $z_n\in L^2_\rad$ such that $\|z_n\|_2=1$, $z_n\to z_\I$ weakly and $\LR{(1-A)z_n|z_n}\to 0$ as $n\to\I$. Since $Az_n\to Az_\I$ strongly, we have 
\EQ{
 \LR{(1-A)z_\I|z_\I}\le 0, \pq z_\I \perp |\na|^{-1}\rh, |\na|\La W.}
Then \eqref{L pos} implies that $\LR{(1-A)z_\I|z_\I}=0$, and $z_n\to z_\I$ strongly. So there is a Lagrange multiplier $c\in\R$ such that 
\EQ{
 (1-A)z_\I = c|\na|^{-1}\rh.}
On the other hand, $\calL\rh=-k^2\rh$ gives $(1-A)|\na|\rh=-k^2|\na|^{-1}\rh$, whence 
\EQ{
 c=\LR{(1-A)z_\I||\na|\rh}=\LR{z_\I|(1-A)|\na|\rh}=0,}
and thus 
\EQ{
 (1-A)z_\I=0.}
This implies that for some $b\in\R$,
\EQ{
 z_\I = b |\na|\La W.}
Since $z_\I\perp |\na|\La W$, we conclude that $z_\I=0$, which contradicts the strong convergence and $\|z_n\|=1$. Thus \eqref{1-A pos bd} is proved. 

It remains to bound $c$. Since 
\EQ{
 \LR{\ga|\La^*\rh}=c\LR{\La W|\La^*\rh}+\LR{z_+||\na|^{-1}\La^*\rh},}
$\LR{\La W|\La^*\rh}<0$ by \eqref{nondege orth} and $|\na|^{-1}\La^*\rh\in L^2$, we infer that
\EQ{
 |c| \lec |\LR{\ga|\La^*\rh}|+\|z_+\|_2,}
which together with \eqref{1-A pos bd} implies the desired estimate.  
\end{proof}

Thus we deduce that, if $u$ is close enough to $\calS$ and $\lam=\lam(u)$ then  
\EQ{ \label{energy equiv}
 E_{\lam(u)}(\vec u) \pt\sim \|\dot u\|_2^2 + |\mu_\lam(u)|^2 + \|\na \ga_\lam\|_2^2 + O(\|\na v_\lam\|_2^3)
 \pr\sim \|\dot u\|_2^2+\|\na v_\lam\|_2^2 \sim \|\vec u-(|\na|W_{\lam(u)},0)\|_2^2. }
For brevity, we write 
\EQ{ \label{def muS}
 \mu_\calS(u):=\mu_{\lam(u)}(u), \pq E_\calS(\vec u):=E_{\lam(u)}(\vec u),}
when $u$ is close to $\calS$. 

Now we can define our distance function $d_\calS(\vec\fy)$. 
Let $\chi(r)\in C_0^\I(\R)$ be a symmetric decreasing function such that 
\EQ{ \label{def chi}
 \chi(r)=\CAS{1 &(|r|\le 1) \\ 0 &(|r|\ge 2).}} 
Let $d_0$ denote the linear distance from $\calS$ 
\EQ{
 d_0(\vec\fy):=\inf_{\nu>0}\|\vec\fy-\vec W_\nu\|_2,}
and then define 
\EQ{ \label{def d_S}
 d_\calS(\vec\fy):=\pt\chi(d_0(\vec\fy)/\de_E)E_\calS(\vec\fy)^{1/2} 
 +\chi(d_0(-\vec\fy)/\de_E)E_\calS(-\vec\fy)^{1/2}
 \pr+[1-\chi(d_0(\vec\fy)/\de_E)-\chi(d_0(-\vec\fy)/\de_E)]C_E\min_\pm d_0(\pm\vec\fy),}
for some fixed $0<\de_E\ll \min(1,\|\na W\|_2)$ and $C_E\gg 1+\|\na W\|_2$, such that for $d_0(\vec\fy)<2\de_E$, $\vec\fy$ is close to either $\vec \calS=\{(|\na|W_\lam,0)\}_{\lam>0}$ or $-\vec \calS$, and 
\EQ{
 d_\calS(\fy)^2 = E_\calS(\pm\vec\fy) = E(\vec\fy)-J(W)+k^2\mu_\calS(\pm\fy)^2.}
Since $\lam(\fy)$, $\mu_\calS$ and $E_\calS$ have been defined only near $\calS$, it is harmless and convenient to extend them evenly around $-\calS$:
\EQ{
 E_\calS(\vec\fy):=E_\calS(-\vec\fy), \pq \mu_\calS(\fy):=\mu_\calS(-\fy), \pq \lam(\fy):=\lam(-\fy).}
Thus $d_\calS:\HH\to[0,\I)$ is continuous and even, satisfying 
\EQ{
 d_\calS(\vec\fy) \sim \min_\pm d_0(\vec\fy)=\dist_{L^2}(\vec\fy,\vec \calS\cup-\vec \calS).}
The following lemma shows the basic property of the distance: once we are slightly away from $\calS$, then the unstable mode $\mu_\calS(u)$ becomes the dominant part of the distance. Our analysis in this paper is mostly in this region.

\begin{lem} \label{lem:eigendom}
For any $\vec\fy\in\HH$ satisfying 
\EQ{
 E(\vec\fy)-J(W) \le d_\calS(\vec\fy)^2/2,  \pq d_\calS(\vec\fy) \le \de_E,}
one has  $|\mu_\calS(\fy)| \sim d_\calS(\vec\fy) = E_\calS(\fy)^{1/2}$. 
\end{lem}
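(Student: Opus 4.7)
My plan is to exploit the identity $E_\calS(\vec\fy) = E(\vec\fy) - J(W) + k^2\mu_\calS(\fy)^2$, valid for $\fy$ close to $\calS$, together with the spectral positivity supplied by Lemma~\ref{lem:non-neg spec}. The equality $d_\calS(\vec\fy) = E_\calS(\vec\fy)^{1/2}$ asserted in the conclusion must be read off from the definition~\eqref{def d_S}, which requires first placing $\vec\fy$ in the clean subregion where the cutoffs are sharp.

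For that localization I would argue as follows. Since $\vec\calS$ and $-\vec\calS$ sit at $L^2$-distance $\ge 2\|\na W\|_2\gg\de_E$ apart, at most one of $d_0(\pm\vec\fy)$ can be $\le 2\de_E$. Combining this with the equivalence $d_\calS\sim\min_\pm d_0(\pm\vec\fy)$ recorded just before the lemma, the hypothesis $d_\calS(\vec\fy)\le\de_E$ together with the built-in choices $\de_E\ll\min(1,\|\na W\|_2)$ and $C_E\gg 1+\|\na W\|_2$ rule out the narrow transition range and force $\min_\pm d_0(\pm\vec\fy)\le\de_E$. Since $d_\calS$, $\mu_\calS$, and $E_\calS$ were all extended evenly, I may replace $\fy$ by $-\fy$ so that $d_0(\vec\fy)\le\de_E$ and $d_0(-\vec\fy)\ge 2\de_E$; then \eqref{def d_S} collapses to $d_\calS(\vec\fy)^2 = E_\calS(\vec\fy) = E(\vec\fy)-J(W)+k^2\mu_\calS(\fy)^2$.

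With this identity the lower bound is immediate: the hypothesis $E(\vec\fy)-J(W)\le d_\calS(\vec\fy)^2/2$ rearranges into $k^2\mu_\calS(\fy)^2 \ge d_\calS(\vec\fy)^2/2$, hence $|\mu_\calS(\fy)|\gtrsim d_\calS(\vec\fy)$. For the matching upper bound I would plug in the orthogonal expansion
\EQ{
 E_\calS(\vec\fy) = \tfrac12\bigl[\|\dot\fy\|_2^2 + k^2\mu_\calS(\fy)^2 + \LR{\calL_\lam\ga_\lam|\ga_\lam}\bigr] - C_\lam(v_\lam), \quad \lam=\lam(\fy),
}
together with $\LR{\calL_\lam\ga_\lam|\ga_\lam}\ge 0$ from Lemma~\ref{lem:non-neg spec} (rescaled to $\calL_\lam$) and $|C_\lam(v_\lam)|\lesssim\|\na v_\lam\|_2^3\lesssim E_\calS(\vec\fy)^{3/2}$ from~\eqref{energy equiv}. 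These give $\tfrac{k^2}{2}\mu_\calS(\fy)^2 \le E_\calS(\vec\fy)+O(E_\calS(\vec\fy)^{3/2})$, and the smallness $\de_E\ll 1$ absorbs the cubic remainder to yield $|\mu_\calS(\fy)|\lesssim d_\calS(\vec\fy)$.

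The main technical hurdle is the localization step: one has to verify that the hypothesis $d_\calS(\vec\fy)\le\de_E$ genuinely excludes the narrow transition region of the cutoffs in~\eqref{def d_S}, which is precisely what the built-in choices of the parameters $\de_E$ and $C_E$ were designed to enforce. Once one is in the clean region, the content of the lemma is simply that the energy-gap hypothesis $E(\vec\fy)-J(W)\le d_\calS^2/2$ forces the unstable eigendirection $\mu_\calS$ to carry a definite fraction of the squared distance, which is then packaged by the spectral positivity of Lemma~\ref{lem:non-neg spec}.
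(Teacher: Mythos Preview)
Your argument is correct and follows the same route as the paper. The paper's own proof is terser: it takes the identity $d_\calS(\vec\fy)^2=E_\calS(\vec\fy)=E(\vec\fy)-J(W)+k^2\mu_\calS(\fy)^2$ for granted from the setup preceding the lemma (so your localization paragraph is just spelling out what the paper absorbed into the choice of $\de_E,C_E$), and for the upper bound it simply cites \eqref{energy equiv} rather than re-expanding $E_\calS$ via Lemma~\ref{lem:non-neg spec}---your expansion is essentially a rederivation of that equivalence.
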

\begin{proof}
By definition of $d_\calS$, we have 
\EQ{
 d_\calS(\vec\fy)^2=E_\calS(\vec\fy)\pt=E(\vec\fy)-J(W)+k^2|\mu_\calS(\fy)|^2,}
and so $d_\calS(\vec\fy)^2-k^2|\mu_\calS(\fy)|^2 < d_\calS(\vec\fy)^2/2$, 
which implies $|\mu_\calS(\fy)| \gec d_\calS(\vec\fy)$, while the other direction of the inequality is always true by \eqref{energy equiv}. 
\end{proof}

\section{Variational structure} 
In this section, we prove the following crucial variational type lemma, which is used to control the dynamics away from the ground states in the proof of the ``one pass theorem''. 
 Here the argument is static in the phase space $\HH$. 
Due to the underlying scaling invariance, we need to use the concentration compactness approach. 
\begin{lem}\label{lem: variational} 
There is a continuous increasing function $\eps_V:(0,\I)\to(0,1)$ such that if $\vec\fy\in\HH$, $E(\vec\fy)\le J(W)+\eps_V(\de)^{2}$ and $d_\calS(\vec\fy)\ge\de$ for some $\de>0$, then we have either 
\EQ{
 K(\fy) \ge \min\{\kappa(\delta),\,c\|\nabla\fy\|_{L^2}^2\}
}
 or else 
\EQ{
 K(\fy) \le -\kappa(\delta)
}
for suitable $\kappa(\delta)>0$ and an absolute constant $c>0$. 
\end{lem}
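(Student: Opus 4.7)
The plan is a contradiction-plus-compactness argument. The $c\|\nabla\fy\|_2^2$ alternative handles the small-data regime for free: by Sobolev there is an absolute $\eta_0>0$ with
\EQ{
 K(\fy) \ge \|\na\fy\|_2^2-C\|\na\fy\|_2^{2^*} \ge \tfrac12\|\na\fy\|_2^2 \pq\text{whenever } \|\na\fy\|_2 \le \eta_0,
}
independently of $\de$. It therefore suffices to produce $\kappa(\de),\eps_V(\de)>0$ such that either $K(\fy)\ge\kappa(\de)$ or $K(\fy)\le-\kappa(\de)$ under the hypotheses and the additional assumption $\|\na\fy\|_2\ge\eta_0$.

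Assume the conclusion fails for some fixed $\de>0$. Then I extract a sequence $\vec\fy_n\in\HH$ with $E(\vec\fy_n)\to J(W)$, $d_\calS(\vec\fy_n)\ge\de$, $\|\na\fy_n\|_2\ge\eta_0$, and $K(\fy_n)\to 0$. Using $J=H+K/2^*$ together with the sharp Sobolev inequality, the limit $K(\fy_n)\to 0$ forces $\|\na\fy_n\|_2^2\ge\|\na W\|_2^2-o(1)$, while the energy identity
\EQ{
 E(\vec\fy)=\tfrac12\|\dot\fy\|_2^2+\|\na\fy\|_2^2/d+K(\fy)/2^* \le J(W)+o(1)
}
provides the matching upper bound; I conclude $\|\dot\fy_n\|_2\to 0$ and $\|\na\fy_n\|_2\to\|\na W\|_2$. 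In particular $\fy_n$ is an extremizing sequence for the sharp Sobolev quotient on $\dot H^1_\rad(\R^d)$.

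I then invoke the Aubin--Talenti--Lions classification of such extremizing sequences: after passing to a subsequence there exist $\lam_n>0$ and $\s_n\in\{\pm 1\}$ with $T_{\lam_n}^{-1}\fy_n\to\s_n W$ strongly in $\dot H^1$, equivalently $\|\na\fy_n-\s_n\na W_{\lam_n}\|_2\to 0$. Together with $\|\dot\fy_n\|_2\to 0$ this yields $d_0(\s_n\vec\fy_n)\to 0$ and therefore $d_\calS(\vec\fy_n)\to 0$ by the definition \eqref{def d_S}, contradicting $d_\calS(\vec\fy_n)\ge\de$. The $\kappa(\de)$ produced for each $\de$ is then upgraded to a continuous, increasing envelope $\eps_V(\de)$ taking values in $(0,1)$ to match the stated form.

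The main obstacle is precisely the compactness in the last step. In the subcritical analogue \cite{NakS} there is no scaling symmetry, so a bounded radial $H^1$-sequence is automatically precompact in the relevant nonlinear norms via Strauss's lemma, and the weak limit is identified directly as the ground state. Here the noncompact action of $T_\lam$ on $\dot H^1_\rad$ means that an extremizing sequence can drift to scale zero or infinity, and I must first select the correct $\lam_n$ before any convergence statement makes sense. This rests on the fact that $T_\lam$ is the only noncompact symmetry of the Sobolev quotient in the radial setting and that the Aubin--Talenti extremizer is unique up to $T_\lam$ and sign, which together allow the extraction of $\lam_n$ and the identification of the profile.
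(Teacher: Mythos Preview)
Your argument is correct. The reduction to an extremizing sequence for the Sobolev inequality is clean: from $K(\fy_n)\to 0$ and $\|\na\fy_n\|_2\ge\eta_0$ you correctly extract $\|\na\fy_n\|_2^{2^*-2}\ge S^{-2^*}-o(1)=\|\na W\|_2^{2^*-2}-o(1)$, while the energy identity $E(\vec\fy_n)=\tfrac12\|\dot\fy_n\|_2^2+\|\na\fy_n\|_2^2/d+K(\fy_n)/2^*$ and $J(W)=\|\na W\|_2^2/d$ give the matching upper bound and $\|\dot\fy_n\|_2\to 0$. Lions's limit-case concentration-compactness then supplies the scales $\lam_n$ and strong $\dot H^1$ convergence to $\pm W$, and the contradiction with $d_\calS\ge\de$ follows from the equivalence $d_\calS\sim\min_\pm d_0$.

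The paper reaches the same contradiction but does not invoke Lions as a black box. Instead it carries out a self-contained concentration-compactness argument: a dyadic ``no-dichotomy'' lemma (if the $L^2(dx/r^2)$ mass of $\fy_n$ were split across an annulus with vanishing mass, one could cut $\fy_n$ into two pieces each with $H<H(W)$ and hence $K\gec\e^4$, contradicting $K(\fy_n)\to 0$), followed by a rescaling that traps the mass in a fixed shell and gives $L^{2^*}$ convergence via the radial Sobolev embedding. Your route is shorter and more conceptual, trading the explicit dichotomy argument for a citation; the paper's route is more elementary and keeps the proof self-contained, which is perhaps natural given that the dyadic splitting idea recurs elsewhere in the critical setting. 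Either way the core mechanism is the same: the only obstruction to compactness of the contradiction sequence is the dilation symmetry, and once that is factored out the limit must lie on $\pm\calS$.
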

\begin{proof} 
We may assume $\e_V(\de)\ll\de\ll\de_E$. If $\|\dot\fy\|_2\ll\de$, then we have $\de<d_\calS(\vec\fy) \sim \dist_{\dot H^1}(\fy,\calS\cup-\calS)$. 
Otherwise, $J(u)<J(W)-O(\de^2)$ and so $\dist_{\dot H^1}(\fy,\calS\cup-\calS)\gec\de^2$. 
The conclusion is clear for $\|\na\fy\|_2\ll 1$ by Sobolev. Hence,
 if the conclusion fails for some $\de>0$, then there exists a sequence $\fy_n\in\dot H^1_\rad$ such that $\|\na\fy_n\|_2\gec 1$ and 
\EQ{
 J(\fy_n)<J(W)+1/n,\ |K(\fy_n)|<1/n,\ \dist_{\dot H^1}(\fy_n,\calS)\gec\de^2.}
The first two conditions together with $K(W)=0$ imply that 
\EQ{
 \limsup_{n\to\I}H(\fy_n) \le H(W),}
and so $\fy_n$ is bounded in $\dot H^1\subset L^{2^*}\cap rL^2$ by the Sobolev and Hardy inequalities. 
We deal with possible concentration by the dyadic decomposition in $x\in\R^d$:
\EQ{
 D_j^<:=\{|x|<2^j\},\pq D_j:=\{2^j<|x|<2^{j+1}\}, \pq D_j^>:=\{2^{j+1}<|x|\}.}
First we show that for any $\e>0$, there is $\nu>0$ such that for any $h\in\Z$ and $n$, 
\EQ{ \label{no dichotomy}
 \|\fy_n/r\|_{L^2(D_h^<)}>\e,\ \|\fy_n/r\|_{L^2(D_h^>)}>\e \implies \|\fy_n/r\|_{L^2(D_h)}>\nu.}
If this fails for some $\e>0$, then along a subsequence there exist $h_n$ such that 
\EQ{
 \|\fy_n/r\|_{L^2(D^<_{h_n})}>\e,\ \|\fy_n/r\|_{L^2(D^>_{h_n})}>\e, \pq \|\fy_n/r\|_{L^2(D_{h_n})}\to 0.}
Let $\fy_n^0:=\chi(2^{-h_n}|x|)\fy_n$ and $\fy_n^1:=\fy_n-\fy_n^0$, with $\chi$ given in \eqref{def chi}. Then we have 
\EQ{
 \pt \|\fy_n/r\|_{L^2(D^<_{h_n})}\lec \|\na\fy_n^0\|_2,\pq \|\fy_n/r\|_{L^2(D^>_{h_n})}\lec\|\na\fy_n^1\|_2,
 \pr \|\na\fy_n\|_2^2=\|\na\fy_n^0\|_2^2+\|\na\fy_n^1\|_2^2+O(\|\na\fy_n\|_2\|\fy_n/r\|_{L^2(D_{h_n})}),
 \pr \|\fy_n\|_{2^*}^{2^*}=\|\fy_n^0\|_{2^*}^{2^*}+\|\fy_n^1\|_{2^*}^{2^*}+O(\|\na\fy_n\|_2^{2^*-2}\|\fy_n/r\|_{L^2(D_{h_n})}^2),
 }
where for the last error estimate, we used the radial Sobolev inequality 
\EQ{
 \|r^{d/2-1}\fy\|_\I \lec \|\na\fy\|_2 \pq(\fy\in\dot H^1_\rad).}
Then for large $n$ and $j=0,1$, we have $H(\fy_n^j)<H(W)-O(\e^2)$, and so, by the optimality of $W$ for the Sobolev inequality,
\EQ{
 K(\fy_n^j)\gec\e^2\|\na\fy_n^j\|_2^2\gec\e^4.}
which contradicts 
\EQ{
 o(1)=K(\fy_n)=K(\fy_n^0)+K(\fy_n^1)+o(1) \pq(n\to\I).}
Thus we obtain \eqref{no dichotomy}. Its right-hand side can hold only for a limited number $N(\nu)=O(\nu^{-2})$ of $h\in\Z$ for each $n$, since 
\EQ{
 \sum_{h\in\Z}\|\fy_n/r\|_{L^2(D_h)}^2\lec\|\na\fy_n\|_{2}^2 \lec 1.}
Hence we can rescale $\fy_n\mapsto\lam_n^{d/2-1}\fy_n(\lam_nx)$ so that for any $\e>0$ there are $j<k\in\Z$ such that for all $n$
\EQ{
 \|\fy_n/r\|_{L^2(r<2^j \cup r>2^k)}<\e,}
which controls the $L^{2^*}$ norm on the same region, via the radial Sobolev estimate as above. Since $\fy_n$ converges strongly  in $L^{2^*}(2^j<r<2^k)$ by
 the radial Sobolev, we conclude that the rescaled $\fy_n$ converges to some $\fy_\I$ in $L^{2^*}(\R^d)$. Since all the functional properties are preserved by the rescaling, we deduce 
\EQ{
 \|\na\fy_\I\|_2\le\|\na W\|_2,\pq K(\fy_\I)\le 0.}
The uniqueness of $W$ as the Sobolev maximizer implies that $\fy_\I\in \calS$, and then the norm convergence implies the strong convergence in $\dot H^1$. However, this implies that $\dist_{\dot H^1}(\fy_\I,\calS)\gec\de^2$, a final contradiction. 
\end{proof}

In order to analyze the behavior of $d_\calS^2(u)\sim E_\calS(u)$ and thereby also $K(u)$ close to the ground states, we will crucially employ the following ejection lemma. 

\begin{lem}\label{lem: ejection} 
There exists $\de_H\in(0,\de_E)$ with the following properties: Let $u$ be a solution on an open interval $I$ such that for some $t_0\in I$ 
\EQ{ \label{away from S}
 \de_0:=d_\calS(\vec u(t_0))\le \de_H, \pq E(\vec u)-J(W) \le \de_0^2/2,} 
and 
\EQ{ \label{outgoing}
  \p_t d_\calS(\vec u(t_0))\ge 0.}
Then for $t>t_0$ in $I$ and as long as $d_\calS(\vec u(t))\le\de_H$, $d_\calS(\vec u(t))$ is increasing, 
\EQ{ \label{eject est}
 \pt d_\calS(\vec u(t)) \sim -\sg\mu_\calS(u(t)) \sim e^{k(t-t_0)\lam(u(t_0))}\de_0, 
 \pr \sg K(u(t)) \gec (e^{k(t-t_0)\lam(u(t_0))}-C_*\LR{(t-t_0)\lam(u(t_0))})\de_0
 \pr |\lam(u(t))-\lam(u(t_0))| \lec (e^{k(t-t_0)\lam(u(t_0))}-1)\de_0\lam(u(t_0)),}
for some absolute constant $C_*>0$ and $\sg=\pm 1$ is fixed on the time interval. 
\end{lem}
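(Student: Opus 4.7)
\emph{Plan.} The strategy is to linearize the wave equation about the slowly-drifting profile $W_{\lam(t)}$ and show that the unstable coordinate $\mu(t):=\mu_\calS(u(t))$ grows exponentially at rate $k\lam_0$ with $\lam_0:=\lam(u(t_0))$, while the scaling parameter $\lam(t):=\lam(u(t))$ evolves only polynomially. One works on the maximal subinterval $[t_0,T]$ of $I$ on which $d_\calS(\vec u(t))\le\de_H$, with $\de_H$ chosen small; there, the energy hypothesis $E(\vec u)-J(W)\le\de_0^2/2$ together with Lemma \ref{lem:eigendom} gives $|\mu(t)|\sim d_\calS(\vec u(t))$, and \eqref{energy equiv} furnishes $\|\dot u(t)\|_2+\|\na\ga(t)\|_2\lec d_\calS(\vec u(t))$, where $\ga(t):=v_{\lam(t)}(t)-\mu(t)\rh_{\lam(t)}$. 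Hence proving exponential asymptotics for $\mu$ immediately yields those for $d_\calS$.

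The key step is the modulation analysis. Projecting $\ddot u=\De u+|u|^{2^*-2}u$ onto the normalized mode $\lam^2\rh_\lam$ yields, schematically,
\EQ{
 \ddot\mu=k^2\lam^2\mu+O(d_\calS^2)+O\bigl((|\dot\lam|/\lam+|\ddot\lam|/\lam^2)\,d_\calS\bigr).
}
Differentiating the defining orthogonality $\LR{u|\La^*\rh_\lam}=0$ twice in $t$ and invoking the non-degeneracy \eqref{nondege orth} produces $|\dot\lam|/\lam^2\lec d_\calS$ and $|\ddot\lam|/\lam^3\lec d_\calS^2+(\|\dot u\|_2/\lam)\,d_\calS$, so the $\lam$-contributions on the right side of the $\mu$-ODE are themselves $O(d_\calS^2)=O(\mu^2)$. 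Freezing the coefficient at $\lam_0$ and treating the remainder as a perturbation of size $O(\de_H|\mu|)$, Duhamel's formula produces
\EQ{
 \mu(t)=\cosh\bigl(k\lam_0(t-t_0)\bigr)\mu(t_0)+\frac{\sinh(k\lam_0(t-t_0))}{k\lam_0}\dot\mu(t_0)+(\text{small}).
}
The hypothesis $\p_t d_\calS(\vec u(t_0))\ge 0$, combined with $|\mu|\sim d_\calS$, forces $\mu(t_0)\dot\mu(t_0)\ge 0$, so the two explicit terms have matching signs and the outgoing exponential $e^{k\lam_0(t-t_0)}$ dominates. Setting $\sg:=-\sign\mu(t_0)$, one concludes $-\sg\mu(t)\sim\de_0 e^{k\lam_0(t-t_0)}$ on $[t_0,T]$; $d_\calS$ is therefore monotone and the bootstrap closes for $\de_H$ small. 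Integrating the $\dot\lam$-estimate gives $|\lam(t)-\lam_0|\lec\lam_0^2\int_{t_0}^t d_\calS\lec \lam_0(e^{k\lam_0(t-t_0)}-1)\de_0$.

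For the sign of $K$, Taylor-expand about $W_\lam$ to obtain
\EQ{
 K(u)=-(2^*-2)\LR{W_\lam^{2^*-1}|v_\lam}+\LR{\calL_\lam v_\lam|v_\lam}+O(\|\na v_\lam\|_2^3).
}
The component along $\mu\rh_\lam$ contributes $C_K\mu$ with $C_K=-(2^*-2)\LR{W^{2^*-1}|\rh}<0$, a nonzero scale-invariant constant (the dilation factors cancel in the pairing, and $W,\rh>0$). Together with $\sg=-\sign\mu(t_0)$ this produces the exponential main term $\sg C_K\mu(t)\sim |C_K|\de_0 e^{k\lam_0(t-t_0)}$. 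The remaining pieces consist of the quadratic drift $O(\de_0^2 e^{2k\lam_0(t-t_0)})$ and the linear-in-$\ga$ contribution $-(2^*-2)\LR{W_\lam^{2^*-1}|\ga}$, the latter potentially growing linearly in $(t-t_0)\lam_0$ owing to the unprotected zero-eigenmode of $\calL_\lam$ along $\La W_\lam$ present in $\ga$; these are exactly what the correction $C_*\LR{(t-t_0)\lam_0}\de_0$ is designed to absorb. The main technical obstacle is the interplay between the fast exponential growth of $\mu$ and the time-dependence of the coefficient $k^2\lam(t)^2$: a naive integration would accumulate phase drift that could spoil the exponential comparison. The resolution rests on $\La W_\lam$ being a threshold (zero-eigenvalue) mode of $\calL_\lam$ rather than an exponentially unstable one, which forces $\dot\lam$ to be algebraically small in $d_\calS$ and makes it admissible to freeze $\lam=\lam_0$ in the Duhamel formula.
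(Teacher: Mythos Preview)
Your overall architecture—modulation at the moving scale $\lam(t)$, bound $\dot\lam$ via the orthogonality, then freeze $\lam=\lam_0$ in the $\mu$-ODE—is a legitimate alternative to the paper's route, and your derivation of the exponential asymptotics $-\sg\mu(t)\sim\de_0 e^{k\lam_0(t-t_0)}$ and of the $\lam$-drift bound is sound. The paper instead fixes the scale $\lam_0$ from the outset, decomposing $u=W_{\lam_0}+\mu_1\rho_{\lam_0}+\ga_1$, runs a bootstrap for $(\mu_1,\ga_1)$ at that frozen scale, and only afterwards compares $E_1$ with $E_\calS$ to extract the outgoing condition. Both organizations rest on the same idea that the scaling mode is algebraically slow compared with the hyperbolic one.

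Where your argument has a genuine gap is the $K$ estimate. The only control you invoke on $\ga$ is the static consequence of \eqref{energy equiv}, namely $\|\na\ga_{\lam}\|_2\lec d_\calS$. But $d_\calS\sim\de_0 e^{k\lam_0(t-t_0)}$, so this bound on the linear $\ga$-term in $K$ is of the \emph{same} size as the main term $C_K\mu$, and you cannot conclude anything about the sign of $K$. Your assertion that $\LR{W_\lam^{2^*-1}|\ga}$ ``potentially grows linearly in $(t-t_0)\lam_0$'' is exactly what must be proved, and nothing in your moving-frame setup delivers it. (Your attribution of this linear growth to the $\La W_\lam$ component of $\ga$ is also off: differentiating $K(W_\lam)\equiv 0$ in $\lam$ gives $\LR{W_\lam^{2^*-1}|\La W_\lam}=0$, so that component does not contribute to the linear part of $K$ at all.)

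The paper supplies precisely the missing dynamical input, but at the \emph{fixed} scale: subtracting the $\mu_1$-energy from the full linearized energy of $v_1$ yields an evolution law for $\|\dot\ga_1\|_2^2+\LR{\calL\ga_1|\ga_1}$ with forcing of size $O(\|\na\ga_1\|\|\na v_1\||\dot\mu_1|)$, and integrating gives $\|\dot\ga_1\|_2^2+\LR{\calL\ga_1|\ga_1}\lec\de_0^2$ (plus higher order). The drift of $\LR{\ga_1|\La^*\rho}$ away from its initial value $0$ is then at most $\int\|\dot\ga_1\|\lec\de_0(t-t_0)$, and Lemma~\ref{lem:non-neg spec} converts this into $\|\na\ga_1\|_2\lec\de_0\LR{t-t_0}$, i.e.\ genuinely linear growth. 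Plugging this into the expansion of $K$ around $W_{\lam_0}$ produces the $C_*\LR{(t-t_0)\lam_0}\de_0$ correction. To close your argument you would need an analogous dynamical estimate for $\ga$ (either by transplanting this energy identity to the moving frame, or by reverting to the fixed frame for this one step); the static bound from \eqref{energy equiv} is not enough.
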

\begin{proof} 
We will show that the hyperbolic mode $\mu_\calS$ grows exponentially, dominating the other modes. 
The main difficulty we encounter by comparison to~\cite{NakS} is that we need to pay attention to the evolution of the root mode, 
or equivalently the scaling parameter $\lam(u(t))$, which cannot be controlled by the energy or other conserved quantities. 
What saves us is that the evolution of $\lam(u(t))$ is slow enough that it can be ignored compared with the exponential growth. 
Without loss of generality we rescale to achieve
\EQ{
 \lam(u(t_0))=1,}
and we work first with this fixed scale. We may also assume that $u$ is close to $\calS$ at $t=t_0$, decomposing it by
\EQ{ \label{decompose at 1}
 u = W+v_1 = W+\mu_1(u)\rh+\ga_1.}
We prove exponential upper bounds by a bootstrap argument.  

{\bf{Bootstrap assumption}}: {\it We assume, for some large constant $M\gg 1$, and for $t\in I$ such that $M^2 \de_0 e^{k(t-t_0)}\ll 1$,}  
\EQ{ \label{bootstrap}
 \pt |\vec\mu_1(u(t))| \le M \de_0 e^{k(t-t_0)},
 \pr \|\vec\ga_1(t)\|_2 \le M \de_0 \LR{t-t_0} + M^3 \de_0^2 e^{2k(t-t_0)},}
which implies $\|\vec v_1(t)\|_2\lec M\de_0e^{k(t-t_0)}$. 
We will show that better bounds hold under the above assumption. Then by the time continuity, we obtain the above bound on any such time interval. 
We emphasize that in this argument we do not employ any dispersive estimates. 

In the following, we abbreviate $\mu_1(t)=\mu_1(u(t))$. Then $v_1=u-W$ solves 
\EQ{
 \ddot v_1 + \calL v_1 = N(v_1) \pt:= |W+v_1|^{2^*-2}(W+v_1) - W^{2^*-1} - (2^*-1)W^{2^*-2}v_1 
 \pr= O(W^{2^*-3}v_1^{2} + |v_1|^{2^*-1}), }
and so, the eigenmode solves 
\EQ{ \label{eq:mu1}
 (\p_t^2-k^2)\mu_1 &= \LR{N(v_1)|\rho}.} 
This leads to the integral equation 
\EQ{ \label{mu1 Duh}
 \mu_1(t) \pt= \mu_+(t) + \mu_-(t) 
 + \int_{t_0}^t\frac{\sinh(k(t-s))}{k}\LR{N(v_1)(s)|\rh}\,ds, 
}
where $\mu_\pm(t)$ denote the solutions of the linearized equation 
\EQ{
 \mu_\pm(t) := e^{\pm k(t-t_0)}\frac{1}{2}\bigl[1 \pm \frac{1}{k}\p_t\bigr]\mu_1(t_0).}
Our assumptions at time $t=t_0$ imply that $|\mu_{\pm}(t_0)|\lesssim \de_0 \ll 1$. Furthermore, we estimate  via the bootstrap assumptions 
\EQ{ \label{eq:m1int}
 \Bigl|\int_{t_0}^t e^{\pm k(t-s)}\LR{N(v_1)(s)|\rh}\,ds \Bigr| \pt\lec   \int_{t_0}^t  e^{k(t-s)}  \|N(v_1(s))\|_{L^{2d/(d+2)}} \, ds 
 \pr\lec \int_{t_0}^t  e^{k(t-s)} \|v_1(s)\|_{\dot H^1}^{2} ds \lesssim M^{2}  \de_0^2e^{2k(t-t_0)}.
}
It is immediate from this that 
\EQ{ \label{eq:bootstrmu1}
 |\vec\mu_1(t)|  &\lec \de_0 e^{k(t-t_0)} +   M^{2}\de_0^2  e^{2k(t-t_0)} \ll M\de_0e^{k(t-t_0)}, 
}
since the right-hand side is small. 
To bound the remainder $\ga_1$, we use the energy identity. 
Multiplying the equation of $\mu_1$ with its time derivative yields 
\EQ{
 \p_t[\dot\mu_1^2/2-k^2\mu_1^2-C_1(\mu_1\rh)]=\LR{N(v_1)-N(\mu_1\rh)|\dot\mu_1\rh}.}
Subtracting it from the energy of $v_1$ 
\EQ{
 E(\vec u)-J(W)=\frac{\dot\mu_1^2-k^2\mu_1+\|\dot \ga_1\|_2^2+\LR{\calL\ga_1|\ga_1}}{2}-C_1(v_1),}
we obtain
\EQ{ \label{ene v+}
 \p_t[\|\dot \ga_1\|_2^2/2+\LR{L\ga_1|\ga_1}/2-C_1(v_1)+C_1(\mu_1\rh)]=\LR{N(\mu_1\rh)-N(v_1)|\dot\mu_1\rh}.}
The nonlinear terms are estimated by H\"older and Sobolev (using that $v_1$ is small)
\EQ{
 \pt |C_1(v_1)-C_1(\mu_1\rh)| \lec \|\na\ga_1\|_2\|\na v_1\|_2^2, 
 \pr |\LR{N(v_1)-N(\mu_1\rh)|\dot\mu_1\rh}|\lec \|\na\ga_1\|_2\|\na v_1\|_2|\dot\mu_1|.}
Hence by time integration using the bootstrap bounds, one concludes that 
\EQ{ \label{bd vt}
 \|\dot \ga_1\|_2^2+\LR{\calL\ga_1|\ga_1} \lec \de_0^2+(M\de_0\LR{t-t_0}+M^3\de_0^2e^{2k(t-t_0)})M^2\de_0e^{2k(t-t_0)}.}
The orthogonality \eqref{lam orth} at $t=t_0$ implies that 
\EQ{
 \LR{\ga_1|\La^*\rh}=\int_{t_0}^t\LR{\dot\ga_1|\La^*\rh}dt.}
Hence we can estimate $\|\na\ga_1\|_2$ by using Lemma \ref{lem:non-neg spec} and \eqref{bd vt}. Thus we obtain  
\EQ{
 \pt\|\dot\ga_1\|_2 \lec \de_0 + M^{3/2}\de_0^{3/2}\LR{t-t_0}^{1/2}e^{k(t-t_0)}+M^{5/2}\de_0^2e^{2k(t-t_0)},
 \pr\|\na\ga_1\|_2 \lec \de_0\LR{t-t_0}+M^{3/2}\de_0^{3/2}\LR{t-t_0}^{1/2}e^{k(t-t_0)}+M^{5/2}\de_0^2e^{2k(t-t_0)},}
which is better by $O(M^{-1/2})\ll 1$ than the bootstrap assumption. 
This completes the bootstrap argument, whence the proof of~\eqref{bootstrap}. 
Henceforth, we shall regard $M$ as being an absolute constant and ignore it. 

Next we utilize the monotonicity assumption \eqref{outgoing} on $E_\calS(\vec u)$ in order to obtain a lower bound on $\mu_1$ in the same form. The technical difficulty we face here is that $E_\calS(\vec u)$ is defined with respect to the time-dependent scale $\lam(u(t))$, while the above estimates are at the fixed scale $1=\lam(u(t_0))$. The idea is that $E_\calS(u)$ should differ from $E_1(u)$ only by $O((t-t_0)^2)$ with a small multiple. To see this, we compare the two decompositions  
\EQ{ \label{eq:movingdecomp}
 u(t)\pt=W+v_1(t)=W + \mu_1(u(t))\rho  + \ga_1(t) 
 \pr=W_{\lam}+v_{\lam}(t) = W_{\lam} + \mu_{\lam}(u(t))\rho_\lam + \ga_\lam(t),}
where $\lam=\lam(u(t))$ is chosen according to \eqref{lam orth}. Then we have 
\EQ{ \label{eq:delicate}
  E_1(u) - E_\calS(u) = k^2[\mu_{1}(u)^2 - \mu_{\lam}(u)^2], 
}
as long as $u$ remains close to $\calS$. The right-hand side is estimated by 
\EQ{ 
 \mu_1(u)-\mu_\lam(u)\pt=\LR{v_1|\rh}-\LR{v_\lam|T_{1/\lam}^*\rh}
 \pr=\LR{v_1-v_\lam|\rh}+\LR{v_\lam|(1-T_{1/\lam}^*)\rh}
 \pn=O((\lam-1)^2),}
where we used that 
\EQ{
 \pt v_1-v_\lam = W_\lam - W= (\lam-1)\La W + O((\lam-1)^2)
 \pr (1-T_{1/\lam}^*)\rh_\lam = (\lam-1)\La^*\rh_\lam + O((\lam-1)^2),}
$\La W\perp\rh$ and $v_\lam\perp\La^*\rh_\lam$. 
On the other hand, using the upper bound \eqref{bootstrap}, we have
\EQ{
 |\LR{\La^*\rh|v_1(t)}|=|\LR{\La^*\rh|v_1(t) - v_1(t_0)}| 
 \pt= |\LR{\La^*\rh, \int_{t_0}^t \dot v_1(s)\,ds}|
 \pr\lesssim (e^{k(t-t_0)}-1)\de_0 \ll 1, 
}
hence the implicit function theorem implies that 
\EQ{
|\lambda(u(t)) -1| \lesssim (e^{k(t-t_0)}-1)\de_0, }
and so, 
\EQ{ \label{eq:delicatedifference}
 |E_1(u)-E_\calS(u)| \lec (e^{k(t-t_0)}-1)^2\de_0^3.}
This implies in particular that 
\EQ{
 \p_t E_1(\vec u(t_0)) = \p_t E_\calS(\vec u(t_0)) \ge 0,}
where the last inequality follows from the ``exiting assumption''~\eqref{outgoing}. 
From the energy conservation and the equation \eqref{eq:mu1} of $\mu_1$, we have 
\EQ{
 \pt\p_t E_1(\vec u(t)) = \p_t k^2\mu_1^2 = 2k^2\mu_1\dot\mu_1,}
hence $\p_t E_1(\vec u(t_0))\ge 0$ implies $\mu_+(t_0)\sim\mu_1(u(t_0))\sim \de_0$, and so via \eqref{mu1 Duh}, finally 
\EQ{
 |\mu_1(u(t))| \sim e^{k(t-t_0)}\de_0.}
By continuity, there is $\sg=\pm 1$ constant such that $\sg\mu_1(u(t))<0$. 
Expanding $K$ around $W$, and plugging the above estimates into this expansion yields  
\EQ{ \label{expand K}
 \sg K(u)\pt=-\sg(2^*-2)\LR{W^{2^*-1}|v}+O(\|\na v\|_2^2)
 \pr\gec \mu_1(u(t))-O(\|\vec\ga_1(t)\|_2)
 \gec (e^{k(t-t_0)}-C_*\LR{t-t_0})\de_0.}
To finish the proof of the lemma, it only remains to establish the monotonicity of $d_\calS$. At the fixed scale $1$, it is immediate from the equation that 
\EQ{
 \p_t^2 E_1(\vec u(t))=2k^2(\dot\mu_1^2+\mu_1\ddot\mu_1)
 \ge 2k^2\mu_1(k^2\mu_1+\LR{N(v)|\rh}) \gec e^{2k(t-t_0)}\de_0^2.}
Combining this with \eqref{eq:delicatedifference}, we infer that 
\EQ{
 E_\calS(\vec u(t)) \ge E_\calS(\vec u(t_0))(1+ c(t-t_0)^2),}
with some constant $c>0$. If $E_\calS(\vec u(t))$ becomes decreasing, or more precisely, $\p_tE_\calS(\vec u(t_1))=0$ at some $t_1>t_0$ before reaching $\de_H^2$, then we can apply the above argument backward in time from $t_1$ to concludes that $E_\calS(\vec u(t_0))> E_\calS(\vec u(t_1))$. However, this
 contradicts the above estimate. Hence $d_\calS(\vec u(t))$ is increasing, all the way until it reaches $\de_H$. 
\end{proof}

\section{The one-pass theorem} 

The key step in the proof of Theorem~\ref{thm: Main} consists of the following  assertion. 

\begin{thm}\label{thm: onepass} 
There exist
 $0<\e_*\ll \de_*\ll\de_H$ with the following properties:  Let $\vec u\in C(I;\HH)$ be a solution of \eqref{eq:H1critical} on an open interval $I$, satisfying for some $\e\in(0,\e_*]$, $\de\in(\sqrt 2\e,\de_*]$ and $T_1<T_2\in I$
\EQ{
 E(\vec u)\le J(W)+\e^2, \pq d_\calS(\vec u(T_1))<\de=d_\calS(\vec u(T_2)).}
Then   $d_\calS(\vec u(t))>\de$ for all $t>T_2$ in $I$. 
\end{thm}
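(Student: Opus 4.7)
The argument is by contradiction. If the conclusion fails, let $T_3 \in (T_2, \sup I)$ be the first time after $T_2$ at which $d_\calS(\vec u(T_3)) = \delta$; then $d_\calS(\vec u(t)) > \delta$ on $(T_2, T_3)$, and the outgoing hypothesis $\p_t d_\calS(\vec u(T_2)) \ge 0$ of Lemma~\ref{lem: ejection} is satisfied at $t_0 = T_2$ by continuity. A first reduction is that the trajectory must leave the small tube $\{d_\calS \le \de_H\}$ strictly inside $(T_2, T_3)$: otherwise Lemma~\ref{lem: ejection} would force $d_\calS(\vec u(t))$ to be strictly monotone increasing on $[T_2, T_3]$, contradicting $d_\calS(\vec u(T_3)) = \delta = d_\calS(\vec u(T_2))$. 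Let $T_2' \in (T_2, T_3)$ be the first time at which $d_\calS(\vec u(T_2')) = \de_H$, and symmetrically let $T_3' \in (T_2', T_3)$ be the last such time (its existence is secured by running the ejection lemma backward in time from $T_3$).

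Next I fix a definite sign $\sg \in \{\pm 1\}$ for $K(u(t))$ throughout $(T_2, T_3)$. The ejection lemma at $T_2$ provides $\sg$ satisfying $\sg \mu_\calS(u(t)) < 0$ and $\sg K(u(t)) > 0$ for $t$ slightly past $T_2$. On the open interval $(T_2, T_3)$ we have $d_\calS(\vec u(t)) > \delta$, so, taking $\e_* \le \e_V(\delta)$, Lemma~\ref{lem: variational} rules out $K(u(t)) = 0$: at each time $t \in (T_2, T_3)$ either $K(u(t)) \ge \kappa(\delta)$ or $K(u(t)) \le -\kappa(\delta)$. Continuity of $t \mapsto K(u(t))$ then forces the sign of $K$ to be globally constant and equal to $\sg$, and a reverse-time ejection at $T_3$ independently confirms the same $\sg$ at $T_3'$.

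To derive the contradiction I split on $\sg$. If $\sg = -1$, so that $K(u(t)) \le -\kappa(\delta)$ on $(T_2, T_3)$, I use a localized focusing virial of the form
\[
\tfrac{d^2}{dt^2} \int \chi_R |u|^2\,dx = -2 K(u) + 2\|\dot u\|_2^2 + \err_R,
\]
with $\chi_R$ a radial spatial cutoff at a radius $R$ matched to the effective scale $\lambda(u(T_2'))^{-1}$. A Glassey-type argument then forces $\int \chi_R |u|^2\,dx$ to become concave at a definite positive rate, producing finite-time blow-up before $T_3$ and contradicting $T_3 \in I(u)$. If $\sg = +1$, so that $K(u(t)) \ge \kappa(\delta)$ on $(T_2, T_3)$, I combine the basic virial identity $\frac{d}{dt} \int u \dot u\,dx = \|\dot u\|_2^2 - K(u)$ with a Morawetz-type decay estimate to obtain a bound on the dispersive norm $\|u\|_{L^q_{t,x}((T_2, T_3) \times \R^d)}$. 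This bound keeps $\vec u(t)$ quantitatively far from the concentrated family $\calS$, so $d_\calS(\vec u(t))$ cannot return to $\delta$ at $T_3$.

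The main obstacle, and the principal new feature in the critical setting compared to \cite{NakS}, is the scaling parameter: outside the tube around $\calS$ there is no natural choice of $\lambda(u(t))$, yet the virial truncation radius $R$ must match the effective scale of the solution for the error terms to be absorbed uniformly on $[T_2', T_3']$. Near the endpoints, the ejection lemma supplies the control $|\lambda(u(t)) - \lambda(u(T_2'))| \lec \de_0$, pinning down the scale at $T_2'$ and (symmetrically) at $T_3'$. For the interior of $(T_2, T_3)$, I would invoke the Duyckaerts--Kenig--Merle blow-up classification \cite{DKM1,DKM2}: under the energy constraint $E(\vec u) < J(W) + \e_*^2$, it rules out type-II (bounded-energy) blow-up along $\{d_\calS \ge \delta\}$, providing enough a priori regularity on $(T_2, T_3)$ to justify the choice of $R$ and to close both virial estimates.
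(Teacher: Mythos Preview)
Your overall contradiction setup and the use of the ejection lemma at the endpoints are sound, but the argument breaks down in both branches of the sign dichotomy, and the proposed remedies do not close the gaps.

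First, you misquote Lemma~\ref{lem: variational}: for $\sg=+1$ it only gives $K(u(t))\ge\min(\kappa(\de),c\|\na u(t)\|_2^2)$, not $K(u(t))\ge\kappa(\de)$. The degeneration $\|\na u(t)\|_2\to 0$ is genuinely possible on $(T_2,T_3)$ (the equation is second order in time), and this is precisely the main new difficulty of the $\sg=+1$ case. Your proposed Morawetz/Strichartz bound does not resolve it: a finite $L^q_{t,x}$ norm on $(T_2,T_3)$ in no way prevents $d_\calS(\vec u(T_3))=\de$, since proximity to $\calS$ at a single time carries no Strichartz cost. The paper instead integrates a localized virial identity over $[T_2,T_3]$, obtaining $|\int_{T_2}^{T_3}K(u)\,dt|\lec\de^{1/2}$ from the boundary terms, and then combines it with a localized \emph{energy equipartition} identity $\p_t\LR{wu_t|u}=\|\dot u\|_2^2-K(u)+O(\de)$ to bound $\int[\|\dot u\|_2^2+\|\na u\|_2^2]\,dt$ from above, contradicting energy conservation over the long interval $[T_2,T_3]$. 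This equipartition step is the essential missing idea.

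Second, for $\sg=-1$ your Glassey-type convexity produces a finite blow-up time, but nothing guarantees it occurs before $T_3$; you would need a quantitative lower bound on $T_3-T_2$ matched to the blow-up rate, which you do not supply. The paper bypasses this by the same integrated virial: the boundary terms are $O(\de^{1/2})$ while the integrand contributes $\gec\de_M\gg\de^{1/2}$, a direct contradiction with no blow-up needed. The boundary control, in turn, comes from a moving light-cone cutoff with $\ta_j=\de^{-1}$ together with the spatial decay of $W_\lam$ and the normalization $\lam(u(T_j))\ge 1$; this is how the scaling issue is actually handled, not via \cite{DKM1,DKM2}. The Duyckaerts--Kenig--Merle classification is used only \emph{after} the one-pass theorem (to preclude type-II blow-up in the $\Sg=+1$ region), and invoking it here is both unnecessary and unhelpful.
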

\begin{proof} 
By increasing $T_1$ and decreasing $T_2$ if necessary, we may assume in addition that $\sqrt 2\e<d_\calS(\vec u(T_1))$ and $d_\calS(\vec u(t))$ is nondecreasing on $[T_1,T_2]$. Then Lemma \ref{lem: ejection} applies for all $t\in[T_1,T_2]$ and so $d_\calS(\vec u(t))$ is increasing for $t>T_1$ until it reaches $\de_H$. Arguing by contradiction, we assume that for some $t>T_2$ we have $d_\calS(\vec u(t))\le\de$. Such a $t$ can occur only away from $T_2$ (this will be made more precise shortly), and after $d_\calS(\vec u(t))$ has increased to size $\de_H\gg\de$. Moreover, by applying Lemma \ref{lem: ejection} backward in time, we can find $T_3>T_2$ such that $d_\calS(\vec u(t))$ decreases from $\de_H$ down to $\de$ as $t\nearrow T_3$, and so 
 that $d_\calS(\vec u(t))>\de$ for $T_2<t<T_3$. We may further assume 
\EQ{
 \lam(u(T_2))=1 \le \lam(u(T_3)),}
by rescaling and reversing time, if necessary. 

The theorem is now proved by deducing a contradiction from a localized virial identity, as in \cite{NakS}. Our argument differs from that in~\cite{NakS} in the following  two points:
\begin{enumerate}
\item The estimates in the hyperbolic regime incorporate the scaling changes. 
\item The degeneration $\|\na u(t)\|_2\ll 1$ is treated by the equipartition of energy. 
\end{enumerate}
(1) already appeared in \eqref{eject est}, which is essential in the critical case. (2) seems to be a more general argument than that used in~\cite{NakS}. 
The latter relies on the time oscillation at zero frequency as well as the subcriticality of the equation, neither of which is available for the critical wave equation.

Following~\cite{NakS}, introduce a space-time cutoff function 
\EQ{ \label{def w}
 w(t, x) = &\chi\Bigl(\frac{|x|}{t-T_2+\tau_2}\Bigr) \chi\Bigl(\frac{|x|}{T_3-t+\tau_3}\Bigr),}
where $\chi$ is the same cut-off function as in \eqref{def d_S}, and define\footnote{We are going to recycle this argument with $\ta_2\not=\ta_3$.} 
\EQ{
 \tau_2 = \tau_3 = \delta^{-1}.}
Then from the equation of $u$ we obtain the localized virial identity 
\EQ{ \label{eq:virial1}
\frac{d}{dt}V(t) = 2K(u(t)) + O(\Ext(t)), \pq V(t):=\LR{wu_t|(x\cdot\na+\na\cdot x)u},}
where the exterior free energy is denoted by 
\EQ{ \label{def Eext} 
 \Ext(t): = \frac{1}{2}\int_{|x|\ge R(t)}[|\nabla u|^2 + u_t^2]\,dx,\pq R(t):=\max(t-T_2+\ta_2,T_3-t+\ta_3),}
so that $\supp\p_{t,x}w\subset\{|x|\ge R(t)\}$.
By the finite speed of propagation, we have 
\EQ{ \label{ext energy bd}
 \sup_{T_2<t<T_3}\Ext(t) \lec \max_{j=2,3} \Ext(T_j)\lec\de,} 
where the last estimate follows from $d_\calS(\vec u(T_j))=\de$, $\lam(T_j)\ge 1$, $\ta=\de^{-1}$, and 
\EQ{
 \|\na W_\lam\|_{L^2(|x|>R)} \lec (R/\lam)^{1-d/2}, \pq 1-d/2\le -1/2.}
The left-hand inequality in \eqref{ext energy bd} is proved as follows. For each $T_j$, we can find a free solution $u^0_j$ so that $\p_{t,x}u^0_j(T_j,x)=\p_{t,x}u(T_j,x)$ on $|x|>R(T_j)=\ta$, and 
\EQ{
 \|\vec u^0_j\|_2^2\lec \Ext(T_j) \ll 1,} 
by a suitable extension to $|x|<\ta$. 
Since $\de\ll 1$, the small data wellposedness theory implies that there exists\footnote{Here we do not need the global Strichartz estimate or the scattering property, but the global existence follows from the local wellposedness combined with conservation of the small energy $E(\vec u_j)\sim\|\vec u_j\|_2^2$ as well as $K(u_j)\ge 0$.} a global solution $u_j$ of \eqref{eq:H1critical} with the same initial data as $u^0_j$ at $t=T_j$, which moreover satisfies
\EQ{
 \|\vec u_j\|_{L^\I_tL^2_x}\lec\|\vec u^0_j\|_{L^\I_tL^2_x} \lec \Ext(T_j)^{1/2} \ll 1.}
The propagation property of the linear wave together with the uniqueness for the nonlinear equation implies that $u^0_2=u$ for $|x|>R(t)$ and $T_2<t<(T_2+T_3)/2$, and $u^0_3=u$ for $|x|>R(t)$ and $(T_2+T_3)/2<t<T_3$. Thus we obtain \eqref{ext energy bd}, and so
\EQ{
 \dot V(t) = -2K(u(t)) + O(\de).}

On the other hand, the decay property of $W_\lam$ together with $d_\calS(\vec u(T_j))=\de$ and our choice of cut-off $\ta=\de^{-1}$ implies that 
\EQ{ \label{eq:virialbdry}
 |V(T_2)|+|V(T_3)| \lec  \de(1+\ta^{2-d/2})+\delta^2\ta \lec \de^{1/2},}
hence we have
\EQ{ \label{est virial}
  \left|\int_{T_2}^{T_3} [2K(u(t))- O(\de) ]\,dt\right| \lec \de^{1/2},}
which we are going to lead to a contradiction. 

Now we choose two parameters $0<\de_M\ll\de_H$ and $0<\nu\ll 1$, and set 
\EQ{ \label{choice of parameters}
 0<\de_*^{1/2}\ll\min(\de_M,\ka(\de_M),\nu^2), \pq 0<\e_*\ll\min(\de_*,\e_V(\de_M)),}
where $\ka(\cdot)$ and $\e_V(\cdot)$ are as in Lemma \ref{lem: variational}. 

First we consider the hyperbolic region in $[T_2,T_3]$. 
Let $\mm$ be the collection of local minimal points of $d_\calS(\vec u(t))$ in $[T_2,T_3]$, with the respective 
local minima less than $\de_M$. Since $\de\ll\de_M$, we have $T_2,T_3\in \mm$. For each $t_*\in \mm$, 
applying Lemma~\ref{lem: ejection} forward and/or backward in time, we obtain a subinterval $\hat I(t_*)\ni t_*$ of $[T_2,T_3]$ such that for $t\in \hat I(t_*)$   
\EQ{
 d_\calS(\vec u(t)) \sim e^{k|t-t_{*}|\lam(u(t_*))}d_\calS(\vec u(t_*)),}
and on the boundary $\p\hat I(t_*)$,  either $d_\calS(\vec u(t))=\de_H$, $t=T_2$ or $t=T_3$. 
In the latter cases, $d_\calS(\vec u(t))=\de_H$ at the other endpoint. 
Hence we have 
\EQ{ \label{bd hatI}
 |\hat I(t_*)|\gec \lam(u(t_*))^{-1}\log(\de_H/\de_M(\vec u(t_*))\ge \lam(u(t_*))^{-1}\log(\de_H/\de_M),}
and those intervals are mutually disjoint. Let\footnote{We chose the decomposition into $I_H$ and $I_V$ to maximize the use of the hyperbolic dynamics. One can also use the variational estimate in the overlapping region.}
\EQ{ \label{def IHV}
 I_H :=\bigcup_{t_*\in\mm}\hat I(t_*) \subset [T_2,T_3], \pq I_V:=[T_2,T_3]\setminus I_H.}
Then by the definition of $\mm$, we have $d_\calS(\vec u(t))\ge\de_M$ on $I_V$, and so Lemma \ref{lem: variational} implies 
\EQ{ \label{bd K on IV}
 \sg K(u(t)) \ge \min(\ka(\de_M),c\|\na u(t)\|_2^2),}
with $\sg=\pm 1$ constant on each connected component of $I_V$. 
Moreover, \eqref{eject est} implies that $K(u(t))$ has the same sign at the two endpoints for each internal $\hat I(t_*)$. 
Since $K(u(t))$ cannot change its sign while $t\in I_V$, we deduce that $\sg$ in \eqref{bd K on IV} for $t\in I_V$ and $\sg$ in \eqref{eject est} for $t\in I_H$ are the same constant sign on the whole $[T_2,T_3]$. 

Using the estimate on $K$ in \eqref{eject est}, we obtain for each $t_*\in\mm$, 
\EQ{ 
 \pt\sg\int_{\hat I(t_*)}[2K(u(t))-O(\de)]\,dt 
 \pr\gec \int_{\hat I(t_*)}(e^{k|t-t_*|\lam(u(t_*))}-2C_*\LR{(t-t_*)\lam(u(t_*))})d_\calS(u(t_*))\,dt 
 \gec \frac{\de_H}{\lam(u(t_*))},}
where the $O(\de)$ error was absorbed by the linearly growing factor. 
Moreover, the latter is absorbed by the exponentially growing factor after integration, since $d_\calS(\vec u(t_*))\le \de_M\ll\de_H$. Combining this and \eqref{bd hatI}, we infer that
\EQ{ \label{bd on IH} 
 -\sg\int_{\hat I(t_*)}[2K(u(t))-O(\de)]dt \gec \frac{\de_H}{\log(\de_H/\de_M)}|\hat I(t_*)| \gec \de_M|\hat I(t_*)|.}
Further, since $T_2\in\mm$ where $\lam(u(T_2))=1$, one has 
\EQ{
  |\hat I(T_2)| \gec \log(\de_H/\de_M) \gg 1.}

On $I_V$, we use the variational bound \eqref{bd K on IV}. 
If $\sg=-1$, the bound is uniform and 
\EQ{
 -K(u(t))\ge \ka(\de_M) \gg \de_*>\de.}
Hence 
\EQ{
 \sg\int_{I_V}[2K(u(t))-O(\de)]dt \gec \ka(\de_M)|I_V|.}
Combining it with the above estimate on $I_H$, we obtain  
\EQ{ 
 -\sg\int_{T_2}^{T_3}[2K(u(t))-O(\de)]\, dt \gec \de_M \gg \de_*^{1/2} > \de^{1/2},}
which contradicts \eqref{est virial}, concluding the proof in the case $\sg=-1$. 

If $\sg=+1$, then the lower bound degenerates as $\|\na u(t)\|_2\to 0$. This scenario can occur along some trajectory, since our equation is of the second order in time.  
We are going to show that it does not essentially affect the time integral by using energy equipartition. Decompose $I_V$ into 
\EQ{
 I_0:=\{t\in I_V \mid d_\calS(\vec u(t))<\nu\}, \pq I_1:=I_V\setminus I_0.}
The above argument implies that on $I_1$ we have 
\EQ{
 K(u(t)) \ge \min(\ka(\de_M),\nu^2) \gg \de_*>\de,}
and so
\EQ{ \label{bd on I1}
 \int_{I_1}[2K(u(t))-O(\de)]\,dt \gec \min(\ka(\de_M),\nu^2)|I_1|,}
whereas on $I_0$ we have $K(u(t))\sim\|\na u(t)\|_2^2$ and so
\EQ{ \label{bd on I0}
 \int_{I_0}[2K(u(t))-O(\de)]\, dt \gec \int_{I_0}\|\na u(t)\|_2^2\, dt-O(\de)|I_0|.}
In order to control this, we consider the energy equipartition with the same space-time cut-off as above for the virial identity: from the equation for $u$, 
\EQ{
 \p_t\LR{wu_t|u}\pt=\|\dot u(t)\|_2^2-K(u(t))+O(\Ext(t))
 \pr=\|\dot u(t)\|_2^2-K(u(t))+O(\de).}
Then in the same way as for \eqref{est virial}, we obtain 
\EQ{ \label{equipartition}
 \left|\int_{T_2}^{T_3}[\|\dot u(t)\|_2^2-K(u(t))+O(\de)]\, dt\right|\lec\de^{1/2}.}
On the other hand, \eqref{bd on IH}, \eqref{bd on I1} and \eqref{bd on I0} together with \eqref{est virial} imply 
\EQ{ \label{kinetic bd}
 \min(\de_M,\ka(\de_M),\nu^2)\int_{T_2}^{T_3}\|\na u(t)\|_2^2dt-O(\de)|I_0| \lec \de^{1/2},}
where we used the fact that $\vec u(t)$ is uniformly bounded in the case $\sg=+1$; this is obvious in the hyperbolic region $d_\calS(\vec u(t))<\de_H$, while in the exterior it follows from that $K(u(t))\ge 0$, since 
\EQ{ \label{ene bd K+}
 E(\vec u)-K(u(t))/2^*=\|\na u(t)\|_2^2/d+\|\dot u(t)\|_2^2/2.}
 Using \eqref{kinetic bd} and \eqref{equipartition} as well as \eqref{choice of parameters}, we deduce 
\EQ{
 \int_{T_2}^{T_3}[\|\dot u(t)\|_2^2+\|\na u(t)\|_2^2] dt \ll 1+\de^{1/2}|T_3-T_2|,}
which contradicts the energy conservation
\EQ{
 \int_{T_2}^{T_3}E(\vec u) dt =|T_3-T_2|E(\vec u)>|T_3-T_2|J(W)/2,}
since $|T_3-T_2|>|I_H|+|I_0|\gg 1$. This concludes the proof in the case $\sg=+1$. 
\end{proof}

The above result can be restated in terms of the sign functional as in the subcritical case \cite{NakS}. Let 
\EQ{
 \pt \HH_* = \{\vec\fy\in\HH \mid E(\vec\fy)\le J(W)+\e_*^2\},
 \pr \HH_X = \{\vec\fy\in\HH_* \mid E(\vec\fy)< J(W)+d_\calS^2(\vec\fy)/2\}.}
It is easy to see that $\HH_*\setminus \HH_X$ is a small neighborhood of $\vec \calS\cup-\vec \calS$. 
\begin{cor} \label{cor: onepass}
There exists a continuous function $\Sg:\HH_X\to\{\pm 1\}$ with the following properties. 
\begin{enumerate}
\item Every solution $u$ in $\HH_*$ can change $\Sg(\vec u(t))$ at most once. Moreover, it can enter or exit the region $d_\calS(\vec u)<\de_*$ at most once. 
\item The region $\Sg=+1$ is bounded in $\HH$, while the region $\Sg=-1$ is unbounded. 
\item If $\vec\fy\in\HH_X$ and $E(\vec\fy)\le J(W)+\e_V^2(d_\calS(\vec\fy))$, then $\Sg(\vec\fy)=\sign K(\fy)$, with the convention $\sign 0=+1$. 
\item If $\vec\fy\in\HH_X$ and $d_\calS(\vec\fy)\le \de_M$, then $\Sg(\vec\fy)=-\sign\mu_\calS(\fy)$. 
\end{enumerate}
\end{cor}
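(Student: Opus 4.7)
The strategy is to define $\Sg$ piecewise so that (3) and (4) hold essentially by design, and then to establish continuity and the one-change property via Lemmas~\ref{lem:eigendom} and~\ref{lem: variational} and Theorem~\ref{thm: onepass}. Explicitly, I would set
\[
\Sg(\vec\fy) := \begin{cases}
-\sign \mu_\calS(\fy), & d_\calS(\vec\fy) \le \de_M,\\
\sign K(\fy), & d_\calS(\vec\fy) > \de_M,
\end{cases}
\]
with $\sign 0 := +1$. On the first piece, any $\vec\fy\in\HH_X$ meets the hypothesis of Lemma~\ref{lem:eigendom}, so $|\mu_\calS(\fy)| \sim d_\calS(\vec\fy) > 0$; on the second, the choice $\e_* \ll \e_V(\de_M)$ from~\eqref{choice of parameters} yields $E(\vec\fy)-J(W) \le \e_*^2 \le \e_V(d_\calS(\vec\fy))^2$, putting Lemma~\ref{lem: variational} in force so that $|K(\fy)|$ is bounded away from $0$. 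Both branches are thus locally constant on open subsets of $\HH_X$; property~(4) and property~(3) on $d_\calS>\de_M$ are then immediate from the definition.

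The heart of the argument is the sign identity $\sign K(\fy) = -\sign \mu_\calS(\fy)$ valid on $\HH_X$ for $d_\calS$ small, which gives both continuity at $d_\calS = \de_M$ and the residual case $d_\calS \le \de_M$ of~(3). Expanding $K$ around $W_{\lam(\fy)}$ and using $W_\lam^{2^*-1} = -(2^*-2)^{-1}\calL_\lam W_\lam$, with $v_\lam = \mu_\lam\rh_\lam+\ga_\lam$, one obtains
\[
K(\fy) = -(2^*-2)\LR{W_\lam^{2^*-1}|\rh_\lam}\,\mu_\calS(\fy) + R(\fy),
\]
where $\LR{W_\lam^{2^*-1}|\rh_\lam}>0$ by positivity of both factors. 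The remainder $R(\fy)$ gathers $-(2^*-2)\LR{W_\lam^{2^*-1}|\ga_\lam}$ plus the quadratic $O(\|\na v_\lam\|_{L^2}^2)$ and cubic corrections. I would control $R$ by combining $\|\na\ga_\lam\|_{L^2}\lec d_\calS$ (from~\eqref{energy equiv} and Lemma~\ref{lem:non-neg spec}), the vanishing identity $\LR{W_\lam^{2^*-1}|\La W_\lam}=0$ (scale invariance of $\|\na W_\lam\|_{L^2}^2$), and the scale orthogonality $v_\lam\perp\La^*\rh_\lam$ from~\eqref{lam orth}, to squeeze $|R|$ below the leading $\sim d_\calS$ term once $\de_M$ is chosen small enough; paired with $|\mu_\calS|\gec d_\calS$ from Lemma~\ref{lem:eigendom} this forces the desired sign. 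I expect this to be the main obstacle, since the naive estimate $|\LR{W_\lam^{2^*-1}|\ga_\lam}|\lec\|\na\ga_\lam\|_{L^2}$ is only of the same order as $|\mu_\calS|$, so extra smallness must be extracted from the spectral structure of $\calL_\lam$.

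For property~(1), the ``enters/exits $\{d_\calS<\de_*\}$ at most once'' statement is immediate from Theorem~\ref{thm: onepass} applied at level $\de=\de_*$ forward and backward in time. For the at-most-one $\Sg$-change claim, I note that the ``undefined'' set $\{t : \vec u(t)\in\HH_*\setminus\HH_X\} = \{t : d_\calS(\vec u(t))\le\sqrt{2(E(\vec u)-J(W))}\}$ lies in $\{d_\calS<\de_*\}$ (because $\sqrt{2(E-J(W))}\le\sqrt 2\,\e_*\ll\de_*$) and is itself an interval by the same one-pass argument in the limit, so $T := \{t : \vec u(t)\in\HH_X\}$ has at most two connected components; on each of them, continuity of $\Sg$ forces $\Sg(\vec u(t))$ to be constant, yielding at most one change across the trajectory.

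Finally for~(2): on $\{\Sg=+1\}\cap\{d_\calS>\de_M\}$, $K\ge0$ gives $J(\fy)\ge d^{-1}\|\na\fy\|_{L^2}^2$ and hence $\|\vec\fy\|_\HH^2\lec E\le J(W)+\e_*^2$, while on $\{\Sg=+1\}\cap\{d_\calS\le\de_M\}$ the set lies in a bounded $\de_M$-neighborhood of $\vec\calS\cup(-\vec\calS)$ (bounded because $\|\vec W_\lam\|_\HH=\|\na W\|_{L^2}$ is $\lam$-independent). Unboundedness of $\{\Sg=-1\}$ is witnessed by the family $\vec\fy_n:=(c_n|\na|W,\,0)$ for $c_n\to\infty$: then $E(\vec\fy_n) = J(c_nW) = \|\na W\|_{L^2}^2(c_n^2/2-c_n^{2^*}/2^*)\to -\infty$, so $\vec\fy_n\in\HH_*\cap\HH_X$ eventually, $K(c_nW) = \|\na W\|_{L^2}^2(c_n^2-c_n^{2^*})<0$ gives $\Sg(\vec\fy_n) = -1$, and $\|\vec\fy_n\|_\HH = c_n\|\na W\|_{L^2}\to\infty$.
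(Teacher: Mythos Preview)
Your overall architecture is right: define $\Sg$ via (3) and (4), check consistency on the overlap, and deduce (1) from the one-pass theorem and (2) from the energy identity. The parts concerning (1) and (2) are essentially the same as in the paper and are fine.

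The substantive difference is in the consistency argument, where you attempt a \emph{static} proof that $\sign K(\fy)=-\sign\mu_\calS(\fy)$ for $\vec\fy\in\HH_X$ with $d_\calS(\vec\fy)$ small. As you yourself flag, the linear term $-(2^*-2)\LR{W_\lam^{2^*-1}|\ga_\lam}$ is a priori only $O(\|\na\ga_\lam\|_2)=O(d_\calS)$, the same size as the leading $\mu$--term. The ingredients you list do not close this: the vanishing $\LR{W_\lam^{2^*-1}|\La W_\lam}=0$ kills only the root-mode component of $\ga_\lam$, and the orthogonality $v_\lam\perp\La^*\rh_\lam$ merely gives $\LR{\ga_\lam|\La^*\rh_\lam}=O(|\mu|)$, not smallness of $\ga_\lam$ transverse to $\La W_\lam$. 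The best the $\HH_X$ constraint yields is $\LR{\calL_\lam\ga_\lam|\ga_\lam}\le 3k^2\mu^2+O(d_\calS^3)$, hence $\|\na\ga_{\lam,+}\|_2\lec|\mu|$ for the part of $\ga_\lam$ orthogonal to both $\rh_\lam$ and $\La W_\lam$; but the implicit constant here is of order one, and there is no reason it should beat the fixed number $\LR{W_\lam^{2^*-1}|\rh_\lam}$. So without an additional sharp spectral inequality (which is neither available nor obviously true), the static route does not determine $\sign K$.

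The paper avoids this by a \emph{dynamical} argument: given $\vec\fy\in\HH_X$ with $d_\calS(\vec\fy)\le\de_M$, solve the equation with data $\vec\fy$ and apply Lemma~\ref{lem: ejection} (forward or backward, depending on the sign of $\p_t d_\calS$ at $t_0$). The solution is ejected monotonically until $d_\calS=\de_H$, and the second line of \eqref{eject est} shows that at that endpoint $\sg K(u)>0$ with $\sg=-\sign\mu_\calS$. Since along the trajectory $\sign K(u(t))$ is locally constant once $d_\calS>\de_M$ (by Lemma~\ref{lem: variational}) and $-\sign\mu_\calS(u(t))$ is locally constant once $d_\calS<\de_H$ (by Lemma~\ref{lem:eigendom}), the two signs must coincide at the starting point $\vec\fy$. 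This is the missing idea; if you insert it in place of your static expansion, the rest of your proof goes through.
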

Note that $\HH_*\setminus\HH_X$ is included in $d_\calS<\de_*$, and that (3)--(4) completely determine $\Sg(\vec\fy)$, since we have chosen $\e_*<\e_V(\de_M)$. Moreover, $\Sg(\vec\fy)$ depends only on $\fy$. 
\begin{proof}
Since (3) and (4) are overdetermining $\Sg$, we need the consistency of the conditions. However, this is provided by the ejection lemma \ref{lem: ejection}, starting from any solution in the overlapping region, where $d_\calS<\de_M\ll\de_H$. The second estimate in \eqref{eject est} implies that the two definitions coincide at least at the endpoint of the ejection $d_\calS(\vec u(t))=\de_H$. Since both signs are invariant along the continuous trajectory $\vec u$, they must be the same all the way from the starting point. Thus $\Sg$ is well defined, and then (1) is the conclusion of the one-pass theorem \ref{thm: onepass}. The boundedness in (2) has been shown between \eqref{kinetic bd} and \eqref{ene bd K+}, while it is obvious that the $\Sg=-1$ region is unbounded, since it contains all $\vec\fy$ with negative energy. 
\end{proof}

It remains to determine the fate of the solutions in $\HH_*$ with $d_\calS\ge\de_*$. We will do this in the following two sections for $\Sg=\pm1$ , respectively. 

\section{Blow-up after ejection}
\begin{prop}
No solution can stay strongly continuous in $\HH_*$ with $\Sg=-1$ and $d_\calS\ge\de_*$ for all $t>0$. 
\end{prop}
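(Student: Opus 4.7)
I argue by contradiction: suppose $u$ is defined for all $t\ge 0$ with $\Sg(\vec u(t))=-1$ and $d_\calS(\vec u(t))\ge\de_*$ throughout, and derive a contradiction by adapting the localized virial of Theorem~\ref{thm: onepass} to this one-sided setting. The input from the hypotheses is the uniform variational bound
\[
K(u(t))\le -\ka_* := -\ka(\de_*) < 0, \qquad t\ge 0,
\]
coming from Corollary~\ref{cor: onepass}(3) together with Lemma~\ref{lem: variational} (applicable since one may arrange $\e_*\le\e_V(\de_*)$ by the choice~\eqref{choice of parameters}), which via~\eqref{ene bd K+} also yields the uniform bound $\|\vec u(t)\|_\HH\lec 1$.

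For a large parameter $T\gg 1$, I would apply the two-sided cutoff
$w(t,x)=\chi(|x|/(t+\ta))\chi(|x|/(T-t+\ta))$ with $\ta=\de_*^{-1}$ and the virial $V(t)=\LR{wu_t|(x\cdot\nabla+\nabla\cdot x)u}$ exactly as in the proof of Theorem~\ref{thm: onepass}. The identity $\dot V(t)=2K(u(t))+O(\Ext(t))$ then holds, while the uniform $\HH$-bound together with the radial Sobolev estimate $\|u\|_{L^2(|x|<R)}\lec R\|\nabla u\|_2$ gives the endpoint bounds $|V(0)|+|V(T)|\lec\ta$. Provided that the exterior term $\Ext(t)$ can be made $\ll\ka_*$ uniformly on $[0,T]$, integration produces
\[
\ka_* T \lec |V(T)-V(0)| \lec \ta,
\]
contradicting $T\to\infty$.

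\paragraph{Main obstacle.} The central difficulty is ensuring $\Ext(t)\ll\ka_*$ uniformly on $[0,T]$. By finite speed of propagation, this reduces to controlling the tail energies $\int_{|x|>\ta}(|\nabla u|^2+u_t^2)$ of $\vec u(0)$ and $\vec u(T)$. The initial tail is small for $\ta$ large because $\vec u(0)\in\HH$, but unlike in Theorem~\ref{thm: onepass} the terminal state $\vec u(T)$ enjoys no a priori proximity to $\calS$ forcing decay at scale $\ta$. The natural resolution is to exploit the non-scattering character forced by $K\le-\ka_*$: a scattering-to-zero solution would have $\|u(t)\|_{L^{2^*}}\to 0$ by linear dispersion of its asymptotic free profile $u^L$, whence $K(u(t))\to\|\nabla u^L(t)\|_2^2\ge 0$, contradicting the strict negativity of $K$. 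Since $u$ cannot scatter, a concentration-compactness argument along any sequence $t_n\to\infty$ realizes the asymptotic profile either as a rescaled ground state---excluded by $d_\calS\ge\de_*$---or as a tight profile essentially supported in $|x|<\ta$. The latter case supplies a sequence $T_n\to\infty$ along which the terminal tail is $\ll\ka_*$, and the virial argument then closes.
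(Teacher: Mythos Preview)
Your argument has a genuine gap at the very first step: the claimed uniform bound $\|\vec u(t)\|_\HH\lec 1$ is \emph{false} in the region $\Sg=-1$. The identity~\eqref{ene bd K+} reads
\[
E(\vec u)-\tfrac{1}{2^*}K(u)=\tfrac1d\|\nabla u\|_2^2+\tfrac12\|\dot u\|_2^2,
\]
so when $K(u)\le-\ka_*<0$ the left-hand side equals $E(\vec u)+|K(u)|/2^*$ and provides no upper bound on the energy norm; Corollary~\ref{cor: onepass}(2) says explicitly that the $\Sg=-1$ region is unbounded in $\HH$. Without this bound the endpoint estimate $|V(T)|\lec\ta$ fails, since $|V(T)|\lec\ta\|\vec u(T)\|_\HH^2$ and the right-hand side may diverge with $T$. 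The concentration-compactness sketch you offer does not rescue this: non-scattering alone does not force spatial tightness of $\vec u(T_n)$ at a fixed scale $\ta$, the dichotomy ``rescaled ground state vs.\ tight profile in $|x|<\ta$'' is not a consequence of any profile decomposition absent an a priori bound on $\|\vec u(T_n)\|_\HH$, and even tightness would not control $|V(T_n)|$ if the norm itself grows. In fact the conclusion you are trying to reach (finite-time blow-up) is precisely that $\|\vec u(t)\|_\HH$ becomes unbounded, so assuming otherwise begs the question.

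The paper proceeds by a different mechanism, namely a Levine-type concavity argument on the localized mass $y(t)=\langle wu|u\rangle$ with a \emph{one-sided} forward cutoff $w(t,x)=\chi(|x|/(t+\ta))$. From the equation one gets $\ddot y=2(\|\dot u\|_2^2-K(u))+O(\Ext)$, and after combining \eqref{ene bd K+} with the $I_H/I_V$ decomposition of Theorem~\ref{thm: onepass} one obtains the differential inequality $\ddot y\ge(1+c)(\dot y)^2/y$ up to sign-controlled errors. This forces $y^{-c}$ to become concave and strictly decreasing for large $t$, hence negative in finite time, a contradiction. The key advantage is that only \emph{initial} tail information is required (choose $\ta$ large so that $\Ext(0)\ll\e_*$); no control on $\vec u$ at a terminal time is ever needed.
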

\begin{proof}
Suppose towards a contradiction that there is a solution $u$ on $0<t<\I$ in $\HH_*$ with $\Sg(\vec u(t))=-1$ and $d_\calS(\vec u(t))\ge\de_*$. 
Here we use the identity for $|u|^2$, localized in the same way as for the virial identity. 

We may assume $E(\vec u)>J(W)$, since otherwise the conclusion follows from \cite{KM2,DKM1,DKM2}. 
We choose a time-dependent cut-off function and the localized $L^2$ norm
\EQ{
 w(t,x)=\chi(|x|/(t+\ta)), \pq y(t)=\LR{wu|u},}
for a fixed large $\ta>0$ to be determined later. Using that $\dot w\ge 0$, we have  
\EQ{
 \pt\dot y=\LR{\dot wu+2w\dot u|u} \ge 2\LR{w\dot u|u},}
and using the equation and Hardy's inequality, 
\EQ{ \label{ytt}
 \ddot y\pt=\LR{2w|\dot u^2-|\na u|^2+|u|^{2^*}}+\LR{\ddot w u|u}+\LR{4\dot w u|\dot u}+2\LR{u\na w|\na u}
 \pr=2(\|\dot u\|_2^2-K(u))+O(\Ext(t)),}
where 
\EQ{
 \Ext(t):=\int_{|x|>t+\ta} [|\dot u|^2+|\na u|^2]\,dx
 \lec \Ext(0) \ll \e_*,}
by the same argument as for \eqref{ext energy bd}, provided that we choose $\ta$ sufficiently large. 

In order to control the right-hand side of \eqref{ytt}, we follow the argument in the previous section, below \eqref{est virial}. Note that in the $\Sg=-1$ case, the contradiction assumption at $t=T_3$ was used only for the upper bound on $|V(T_3)-V(T_2)|$, and so the rest of the argument is still valid. 

Let $I=(T_2,\I)=I_H\cup I_V$ and $I_H=\bigcup_{t_*\in\mm}\hat I(t_*)$ as before, see \eqref{def IHV}. 
We have $-K(u(t))\gg \de_*\gg\e_*$ on the variational region $I_V$, while $\int_{\hat I(t_*)}-K(u(t))dt \gg \e_*|\hat I(t_*)|$ on each hyperbolic interval $\hat I(t_*)$. Hence $\dot y(t)\to\I$ and $y(t)\nearrow\I$ as $t\to\I$. Moreover, we can rewrite 
\EQ{ \label{ytt low bd}
 \|\dot u\|_2^2-K(u)\pt=(1+2^*/2)\|\dot u\|_2^2+(2^*-2)\|\na u\|_2^2-2^*E(\vec u).}
In the variational region $I_V$, using that $K(u(t))<0$ we have  
\EQ{ \label{ene up bd}
 E(\vec u)<J(W)+\e_*^2=\frac{2^*-2}{2^*}\|\na W\|_2^2+\e_*^2<\frac{2^*-2}{2^*}\|\na u\|_2^2+\e_*^2,}
which implies 
\EQ{
 \|\dot u\|_2^2-K(u)>(1+2^*/2)\|\dot u\|_2^2-2^*\e_*^2.}
Interpolating it with the other lower bound $\|\dot u\|_2^2+\de_*$ and using $\de_*\gg\e_*^2$, we get 
\EQ{ 
 \ddot y > 4(1+c)\|\dot u\|_2^2 + \e_*^2 \pq (t\in I_V),}
for some constant $c>0$ (say $1/d$). In the other region $I_H$, the last inequality of \eqref{ene up bd} may fail, but the smallness $|K(u(t))|\lec\de_H\ll 1$ allows us to replace it by 
\begin{lem}
For any nonzero $\fy\in\dot H^1$, we have 
\EQ{
 \|\na W\|_2^2 \le \|\na\fy\|_2^2 + (d/2-1)K(\fy) + O(K(\fy)^2/\|\na\fy\|_2^2).}
\end{lem}
\begin{proof}
Since $\fy\not=0$, there is a unique $\lam>0$ such that $K(\lam\fy)=0$, 
that is  
\EQ{ \label{sol la}
 \lam^{2^*-2}=\|\na\fy\|_2^2/\|\fy\|_{2^*}^{2^*}.}
Since $W$ is the Sobolev optimizer with $K(W)=0$, we have 
\EQ{
 \|\na W\|_2^2 \le \|\na\lam\fy\|_2^2.}
Inserting \eqref{sol la}, we obtain the desired conclusion after  Taylor expansion. 
\end{proof}

Since $\|\na u\|_2^2\sim\|\na W\|_2^2$ in the hyperbolic region $I_H$, we thus replace \eqref{ene up bd} with
\EQ{
 E(\vec u)<\frac{2^*-2}{2^*}\|\na u\|_2^2+\frac{d-2}{d}K(u)+O(K(u)^2+\e_*^2),}
and so from \eqref{ytt low bd} we obtain 
\EQ{
 \ddot y > 4(1+c)\|\dot u\|_2^2-2K(u)-O(K(u)^2+\e_*^2) \pq (t\in I_H).}
The leading term is bounded from below via Cauchy-Schwarz:
\EQ{
 4(1+c)\|\dot u\|_2^2\ge (1+c)\frac{|\dot y|^2}{y}.}
Hence  
\EQ{
 \pt \ddot y \ge (1+c)(\dot y)^2/y + \CAS{\e_*^2 &(t\in I_V) \\ -2K(u)-O(K(u)^2+\e_*^2) &(t\in I_H).}}
Hence $y$ is convex on $I_V$, while on each interval $\hat I(t_*)$ in $I_H$, we have in the same way as for \eqref{bd on IH}, 
\EQ{
 \int_{\hat I(t_*)} [-2K(u)-O(K(u)^2+\e_*^2)] dt \gec \de_M|\hat I(t_*)|,}
since $|K(u)|\lec\de_H\ll 1$. Moreover, if $d_\calS(\vec u(t))=\de_H$ at both ends of $\hat I(t_*)$ 
(which is the case except for the first interval), then the above integral on $\hat I(t_*)\cap(-\I,T)$ 
is positive for any $T$; indeed, the main contribution comes from the region where $d_\calS(\vec u(t))\sim\de_H$, and it is much bigger than the negative contribution. 
Therefore $\dot y\to\I$ as $t\to\I$. In particular, $\dot y>0$ and $y\nearrow\I$ for large $t\gg 1$. Since 
\EQ{
 \p_t y^{-c}=-cy^{-1-c}\dot y, \pq \p_t^2y^{-c} = -cy^{-1-c}[\ddot y-(1+c)(\dot y)^2/y],}
and $y^{-1-c}$ is decreasing for large $t$, the same logic as above implies that $\p_t y^{-c}$ does not become bigger in each $\hat I(t_*)$ than its value at the left end of the interval. Hence $\p_t y^{-c}<-a$ for some $a>0$ uniformly for large $t$, which leads to a blow-up by contradiction. 
\end{proof}

\section{Scattering after ejection}
In the other region $\Sg=+1$, we already know that all solutions are uniformly bounded in $\HH$, but that is not sufficient for the global existence of strongly continuous solutions
 in the critical case. Now we resort to the recent result by Duyckaerts-Kenig-Merle \cite{DKM1,DKM2} to preclude concentration (type II) blow-up. This is the only place where we have to restrict the dimensions\footnote{Strictly speaking, the long-time perturbation argument should be also modified for $d>6$ in the scattering proof of Proposition \ref{prop:Sg+ scatt}, but it is a minor issue. See \cite{CNLKG,ScatBlow} for the solution.} to $3$ or $5$. 
 
\begin{prop} \label{prop:Sg+ global}
No solution blows up in $\HH_X$ with $\Sg=+1$. 
\end{prop}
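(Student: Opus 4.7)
The plan is to argue by contradiction. Suppose $u$ is a solution in $\HH_X$ with $\Sg(\vec u(t))=+1$ for every $t$ in a maximal forward existence interval $[0,T_+)$ with $T_+<\I$. Since Corollary~\ref{cor: onepass}(2) asserts that the region $\{\Sg=+1\}\cap\HH_X$ is bounded in $\HH$, one has
\EQ{\sup_{0\le t<T_+}\|\vec u(t)\|_\HH<\I,}
so the blow-up at $T_+$ must be of type II.

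The next step is to invoke the classification of type-II radial blow-up for \eqref{eq:H1critical} in dimensions $d\in\{3,5\}$ due to Duyckaerts--Kenig--Merle \cite{DKM1,DKM2}. At the present energy level $E(\vec u)\le J(W)+\eps_*^2$, this yields a sign $\si\in\{\pm 1\}$, a continuous scale $\lam_*:[0,T_+)\to(0,\I)$ with $\lam_*(t)(T_+-t)\to 0$ (so $\lam_*(t)\to\I$), and a fixed profile $\vec w_*\in\HH$ such that
\EQ{\vec u(t)=\si\,\vec W_{\lam_*(t)}+\vec w(t),\pq \vec w(t)\to\vec w_*\IN\HH.}
Via the implicit function argument behind \eqref{nondege orth} and the concentration $\lam_*(t)\to\I$, the modulation scale $\lam(u(t))$ defined by \eqref{lam orth} is comparable to $\lam_*(t)$, and $\vec u(t)-\si\vec W_{\lam(u(t))}$ still converges in $\HH$, after an asymptotically trivial rescaling, to $\si\vec w_*$. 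Consequently I expect
\EQ{d_\calS(\vec u(t))\to\|\vec w_*\|_{\HH}\quad\text{and}\quad\mu_\calS(u(t))\to 0 \quad (t\to T_+),}
the second limit coming from pairing the concentrating bump $\lam(u(t))^2\rh_{\lam(u(t))}$, which is uniformly bounded in $\dot H^{-1}$ and converges weakly to zero there, against the $\dot H^1$-convergent remainder.

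The contradiction is then produced in two cases. If $\vec w_*=0$, the first limit gives $d_\calS(\vec u(t))\to 0$; but $\vec u(t)\in\HH_X$ requires $E(\vec u)<J(W)+d_\calS(\vec u(t))^2/2$, which in the limit forces $E(\vec u)\le J(W)$, and at this sub-threshold energy no finite-time blow-up with bounded energy norm exists by Kenig--Merle \cite{KM1,KM2} and Duyckaerts--Merle \cite{DM1,DM2}, contradicting $T_+<\I$. If $\vec w_*\ne 0$, then $d_\calS(\vec u(t))\to\|\vec w_*\|_{\HH}>0$; for $\eps_*$ sufficiently small one should be able to ensure $\|\vec w_*\|_{\HH}\le\de_E$ from the small energy excess together with the approximate orthogonal decoupling of energy in the DKM decomposition, whereupon Lemma~\ref{lem:eigendom} applied to $\vec u(t)$ yields $|\mu_\calS(u(t))|\sim d_\calS(\vec u(t))\to \|\vec w_*\|_{\HH}>0$, contradicting $\mu_\calS(u(t))\to 0$. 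The main obstacle will be precisely this last case: justifying the smallness $\|\vec w_*\|_{\HH}\le\de_E$ from $E(\vec u)\le J(W)+\eps_*^2$ through the profile structure, and carrying out the pairing estimate $\mu_\calS(u(t))\to 0$ rigorously in dimension five, where $\vec w_*$ has no a priori pointwise value at the origin so the duality must be performed through $\dot H^{-1}$--$\dot H^1$ with uniform control of the modulation scale up to $T_+$.
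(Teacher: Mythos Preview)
Your plan identifies the right tool (the Duyckaerts--Kenig--Merle profile) and the right mechanism (the energy of the residual profile $\vec w_*$ must be $<\e_*^2$ by decoupling), but there is a genuine gap precisely at the place you flag as the ``main obstacle.'' Energy decoupling alone gives only $E(\vec w_*)<\e_*^2$; to upgrade this to $\|\vec w_*\|_\HH\lec\e_*$ you need $K(w_*)\ge 0$, and the $K$-decoupling $K(u(t))=K(W_{\lam_*(t)})+K(w_*)+o(1)=K(w_*)+o(1)$ only yields this if you already know $K(u(t))\ge 0$ near $T_+$. In your setup you have $\Sg(\vec u(t))=+1$, but Corollary~\ref{cor: onepass}(3) converts this into $\sign K$ only in the variational range $d_\calS\gec\de_M$; in the hyperbolic range $\Sg$ is defined through $\mu_\calS$, and $K(u(t))$ need not be nonnegative there. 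So your chain of implications is circular: smallness of $\vec w_*$ requires $K\ge 0$, and you have no independent way to secure $K\ge 0$ near blow-up.

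The paper breaks this circularity by inserting a preliminary step you omit: the ejection Lemma~\ref{lem: ejection} already rules out blow-up while $d_\calS(\vec u(t))\le\de_H$, because the last estimate in \eqref{eject est} pins the scale $\lam(u(t))$ to $\lam(u(t_0))$ throughout the ejection, precluding any concentration. Hence blow-up can occur only with $d_\calS(\vec u(t))>\de_H$ near $T_+$, and there $\Sg=+1$ gives $K(u(t))\ge 0$ directly via the variational lemma. With this in hand, the DKM decomposition plus decoupling of both $E$ and $K$ yields $\|\dot\fy\|_2^2/2+\|\na\fy\|_2^2/d<\e_*^2$, which immediately contradicts $d_\calS(\vec u(t))>\de_H\gg\e_*$. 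Note that this final contradiction is much simpler than your proposed route through $\mu_\calS(u(t))\to 0$ and Lemma~\ref{lem:eigendom}: once $\|\vec\fy\|_\HH\lec\e_*$ is known, there is no need to analyze the modulation scale $\lam(u(t))$ versus $\lam_*(t)$, nor to perform the $\dot H^{-1}$--$\dot H^1$ pairing you worry about in $d=5$.
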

\begin{proof} 
First, the ejection lemma \ref{lem: ejection} precludes blow-up in the hyperbolic region, since the scaling parameter is a priori bounded during the ejection process, which is valid when reversing the time direction. Hence a blow-up may happen only when $d_\calS(\vec u(t))>\de_H$, where $K(u(t))\ge 0$ and so \eqref{ene bd K+} implies 
\EQ{
 \|\dot u(t)\|_2^2/2+\|\na u(t)\|_2^2/d < J(W)+\e_*^2 = \|\na W\|_2^2/d+\e_*^2.}
This allows us to employ the main result in \cite{DKM1,DKM2}, after reducing $\e_*$ if necessary. 
Suppose $u$ is a solution on $[0,T_+)$ in $\HH_X$ with $\Sg=+1$ and $d_\calS(\vec u(t))>\de_H$ with the blow-up time $T_+<\I$. 
According to their result, we can then write for $t$ sufficiently near $T_+$
\EQ{ 
 \vec u(t) = \vec W_{\lam(t)} + \vec\fy + o(1) \pq\IN\HH,}
for some $0<\lam(t)\to 0$ and some fixed $\vec\fy\in\HH$. 
It is then easily checked that as $t \to T_+-0$ we have 
\EQ{ \label{eq:Kv}
 K(u(t)) = K(W_{\lambda(t)}) + K(\fy) + o(1) = K(\fy) +o(1),}
from which we infer in particular that $K(\fy)\ge 0$. Similarly, we obtain
\EQ{ \label{eq:Ev}
 J(W)+\e_*^2> E(\vec u) = J(W) + E(\vec\fy),}
which implies via \eqref{ene bd K+} and $K(\fy)\ge 0$,  
\EQ{
 \|\dot\fy\|_2^2/2 + \|\na\fy\|_2^2/d < \e_*^2.}
This however contradicts $d_\calS(\vec u(t))>\de_H \gg \e_*$ near $T_+$.  
\end{proof}

Next we employ the Kenig-Merle scheme from \cite{KM1,KM2} to improve the above result. The one-pass theorem will be incorporated in the same way as in the subcritical case \cite{NakS}. Extinction of the critical element requires a little extra work due to the possibility of concentration, which will be however reduced to the above proposition. 
\begin{prop} \label{prop:Sg+ scatt}
Every solution staying in $\HH_X$ with $\Sg=+1$ and $d_\calS\ge\de_*$ for $t>0$ scatters to $0$ as $t\to+\I$ with uniformly bounded Strichartz norms on $[0,\I)$. 
\end{prop}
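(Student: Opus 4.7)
The plan is to follow the Kenig--Merle concentration compactness scheme, combined with the one-pass theorem (Corollary \ref{cor: onepass}) and the no-blow-up result (Proposition \ref{prop:Sg+ global}) already proved. First I would note that the class
\EQ{
\calQ := \{\vec u \in \HH_X \mid \Sg(\vec u) = +1,\ d_\calS(\vec u) \ge \de_*\}
}
is \emph{forward-invariant} along trajectories: indeed, by the one-pass theorem, a solution with $\vec u(0)\in\calQ$ cannot re-enter the tube $d_\calS<\de_*$ (since $\de_*<\de_H$ would trigger an ejection; it could only enter once and then $d_\calS$ would have to increase past $\de_*$), and $\Sg$ cannot change. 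Together with Proposition \ref{prop:Sg+ global}, every such solution exists globally. Small data scattering theory for \eqref{eq:H1critical} then provides a threshold: define
\EQ{
E_c := \sup\{E \le J(W)+\e_*^2 \mid \text{every }\vec u(0)\in\calQ \text{ with }E(\vec u)<E \text{ scatters with uniform Strichartz bound}\},
}
so that $E_c > 0$. If $E_c \ge J(W)+\e_*^2$ we are done; otherwise a critical element must be extracted.

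The extraction proceeds via the Bahouri--G\'erard profile decomposition for the radial energy-critical wave equation (as used by Kenig--Merle \cite{KM2}), combined with the long-time perturbation theory. Taking a sequence $\vec u_n(0)\in\calQ$ with $E(\vec u_n)\searrow E_c$ whose Strichartz norms on $[0,\infty)$ diverge, decompose
\EQ{
\vec u_n(0) = \sum_{j=1}^J T_{\lam_n^j}\vec V^j\bigl(-t_n^j\bigr) + \vec w_n^J,
}
and apply the one-pass theorem/Corollary \ref{cor: onepass} to each nonlinear profile. Profiles with $\Sg = -1$ would lead to finite-time blow-up (Proposition on blow-up after ejection), which contradicts global existence via stability; profiles with $\Sg = +1$ and energy strictly less than $E_c$ scatter by definition of $E_c$; the small-data profiles scatter trivially. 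Standard Pythagorean decomposition of energy forces exactly one ``bad'' profile $\vec V^{1}$ of energy $E_c$ whose associated solution $u_c$ does not scatter, and $u_c(0)\in\calQ$ by weak continuity of $d_\calS$ and continuity of $\Sg$ on $\HH_X$. A second application of the profile decomposition along the flow of $u_c$ then yields the compactness modulo scaling: there is $\lam_c:[0,\infty)\to(0,\infty)$ such that $\{T_{1/\lam_c(t)}\vec u_c(t)\}_{t\ge 0}$ is precompact in $\HH$.

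The rigidity step is the main obstacle and splits according to the behavior of $\lam_c(t)$. In the case $\inf_t\lam_c(t)>0$ and $\sup_t\lam_c(t)<\infty$, the trajectory is essentially confined to a compact set, and I would use a localized virial identity of the same shape as \eqref{eq:virial1} with cut-off scale adapted to $\lam_c$; Lemma \ref{lem: variational} applied with $\de = \de_*$ together with $\Sg = +1$ gives
\EQ{
K(u_c(t)) \ge \min\bigl(\ka(\de_*),\, c\|\na u_c(t)\|_2^2\bigr) > 0,
}
which, combined with energy equipartition as in the proof of Theorem \ref{thm: onepass}, forces $\int_0^T K(u_c)\,dt \to \infty$ at a rate incompatible with the boundary control, contradicting compactness. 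The genuinely delicate case is $\lam_c(t)\to 0$ or $\lam_c(t)\to\infty$ along a sequence, i.e., concentration; here I would invoke the Duyckaerts--Kenig--Merle classification of compact-up-to-scaling solutions \cite{DKM1,DKM2}: such a solution must either agree with $W$ up to symmetries or be a type-II object built from $W$, both of which satisfy $d_\calS\to 0$ along a subsequence, contradicting $u_c\in\calQ$. The uniform Strichartz bound on $I_+(u)$ then follows from the absence of a critical element by a standard compactness-contradiction argument on the Strichartz norm viewed as a function of energy.
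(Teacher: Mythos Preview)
Your extraction step (Part I) is broadly in the spirit of the paper's argument, though several assertions need more care: for instance, ``$u_c(0)\in\calQ$ by weak continuity of $d_\calS$'' is not how the paper transfers the constraint $d_\calS\ge\de_*$ to the critical element. The paper instead uses the nonlinear profile decomposition at a stopping time to compare $d_\calS(\vec u_n)$ with $d_\calS(\vec U^1)$ directly, obtaining $d_\calS(\vec U^1(t))\ge\de_*/2$; this is also where the one-pass theorem actually enters. Similarly, the paper does not sort profiles by their $\Sg$-value but rather shows $K(V^j_n(0))\ge 0$ directly from the energy splitting and the Sobolev optimality of $W$, so that all secondary profiles fall under the Kenig--Merle threshold result. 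These are fixable, but your sketch hides the real work.

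The genuine gap is in your rigidity step, specifically Case~2. You write that when $\lam_c(t)\to 0$ or $\infty$ along a sequence you would ``invoke the Duyckaerts--Kenig--Merle classification of compact-up-to-scaling solutions \cite{DKM1,DKM2}: such a solution must either agree with $W$ up to symmetries or be a type-II object built from $W$.'' No such classification is contained in \cite{DKM1,DKM2}: those papers classify \emph{type-II blow-up} solutions, not global solutions with precompact trajectory modulo scaling. The result you want was not available at the time (and in any case is much harder). The paper's route is quite different and more elementary: it first shows, by finite propagation speed and a virial/equipartition argument recycled from the one-pass theorem, that the scale satisfies $\varrho(t):=1/\lam_c(t)\sim t$ as $t\to\infty$. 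Then it performs the self-similar rescaling $u_n(t,x):=s_n^{d/2-1}u_*(s_nt,s_nx)$ along a sequence $s_n\to\infty$, passes to a limit $u_\infty$ on $(0,1]$, and shows via finite speed of propagation that $u_\infty$ is supported in the cone $|x|\le t$. This forces $u_\infty$ to blow up at $t=0$ while remaining in $\HH_X$ with $\Sg=+1$, contradicting Proposition~\ref{prop:Sg+ global} --- and \emph{that} is the only place where \cite{DKM1,DKM2} enters. In short, the paper converts the global critical element into a type-II blow-up by rescaling, rather than appealing to a classification of global compact solutions; your proposal skips this essential construction.
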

The restriction $d_\calS\ge\de_*$ is essential for the uniform Strichartz bound, since the latter does not hold for all scattering solutions, even for $E(\vec u)<J(W)$. 
\begin{proof}
We argue by contradiction. Let $u_n$ be solutions on $[0,\I)$ in $\HH_X$ satisfying 
\EQ{\label{eq:unseq}
 \pt E(\vec u_n) \to E_{*}\le J(W)+\e_*^2,\quad \|u_{n}\|_{L^q_{t,x}(0,\I)} \to \I, 
 \pr d_S(\vec u_n(t))\ge \de_*, \pq \Sg(\vec u_n(t))=+1, \pq (t>0)
}
where we choose $q=2(d+1)/(d-2)$ so that $L^q_{t,x}$ is an admissible Strichartz norm for the wave equation on $\R^d$. 
Henceforth, $X(I)$ denotes the restriction to $I\times\R^d$ of the Banach function space $X$ on $\R\times\R^d$. It is well-known that $L^q_{t,x}$ and the energy norm are sufficient to control all the other Strichartz norms, such as $L^p_t \dot B^{1/2}_{p,2}$ with $p=2(d+1)/(d-1)$, as well as the nonlinear term in some dual admissible norm such as in $L^{p'}_t \dot B^{1/2}_{p',2}$ (see, for example, \cite{GSV}). 

We may assume that $E_*$ is the minimum for the above property. 
Following the Kenig-Merle argument, the proof consists of two parts: construction and exclusion of a critical element. 

\medskip\noindent{\bf Part I}: Construction of a critical element.

Assuming the existence of \eqref{eq:unseq}, we are going to show that there is a critical element $u_*$, that is a solution on $[0,\I)$ in $\HH_X$ satisfying 
\EQ{ \label{def crit}
 \pt E(\vec u_*) = E_*,\pq \|u_*\|_{L^q_{t,x}(0,\I)}=\I,
 \pq d_S(\vec u_*(t))\ge \de_*, \pq \Sg(\vec u_*(t))=+1,}
and that its trajectory is precompact modulo dilations in $\HH$. 

If $d_\calS(\vec u_n(0))<\de_H$, then by the ejection lemma \ref{lem: ejection}, we have $d_\calS(\vec u_n(t))\ge \de_H$ at some later $t>0$. Since the Strichartz norm on the ejection time interval is uniformly bounded, we may translate each $u_n$ so that 
\EQ{
 d_\calS(\vec u_n(0))\ge\de_H,}
without losing \eqref{eq:unseq}. The translation time is bounded by $\lam(u_n(0))$: the scaling at $t=0$, which remains the same order after the translation. 

Since we chose $\e_*\ll\e_V(\de_H)$, Lemma \ref{lem: variational} implies 
\EQ{\label{eq:Klower}
 K(u_{n}(0)) \ge \min(\kappa(\delta_H),c\|\nabla u_{n}(0)\|_{2}^{2}).}
Now apply\footnote{In what follows, we will pass to subsequences without any further mention. Also note that Merle, Vega independently obtained a decomposition of this type for NLS, \cite{MeV}.} the Bahouri-Gerard decomposition from~\cite{BaG}, see also Lemma~4.3 in \cite{KM2}, to $\{\vec u_n(0)\}_{n\ge1}$. Let $U(t)$ denotes the free wave propagator. We conclude that there exist $\lam^j_n>0$, $t^j_n\in\R$, $\vec\fy^j\in\HH$ and free waves $w^J_n$ such that for any $J\ge1$
\EQ{ \label{eq:BG}
  \pt U(t)\vec u_{n}(0) = \sum_{j=1}^{J} \vec V^j_n(t) + \vec w^J_n(t), 
  \pq \vec V^j_n(t):=U(t+t^j_n)T^j_n\vec\fy^j,}
where $T^j_n$ is the operator defined by $T^j_n f :=(\lam^j_n)^{d/2} f(\lam^j_n x)$, such that 
\EQ{ \label{param orth}
 \pt |\log(\lam^j_n/\lam^k_n)|+|t^j_n-t^k_n|/\lam^j_n \to\I }
for each $j\not=k$, 
\EQ{ \label{ene orth}
 \pt \lim_{n\to\I} \Bigl[ \|\vec u_{n}(0) \|_2^{2} -\sum_{j=1}^{J}  \|\vec V^j_n(0)\|_2^{2} - \|\vec w^J_n(0)\|_{2}^{2} \Bigr]=0, 
 \pr \lim_{n\to\I} \Bigl[ E(\vec u_{n}(0)) - \sum_{j=1}^{J} E(\vec V^j_n(0)) - E(\vec w^J_n(0)) \Bigr] =0}
for each $J$, and 
\EQ{ \label{Str vanish}
  \lim_{J\to\I}\limsup_{n\to\I}\|w^J_n\|_{L^\I_t L^{2^*}_x(\R)\cap L^q_{t,x}(\R)}=0.} 
The last property applies to any other non-sharp Strichartz norm by interpolation, since those free waves are all uniformly bounded. 

First we check that all components retain $K\ge 0$ at $t=0$. Using \eqref{ene bd K+}, we get 
\EQ{
 E(\vec u_n)  - \frac{1}{2^*} K(u_{n}(0)) &\ge \frac 1d\|\vec u_n(0)\|_2^2 
 = \sum_{j=1}^{J} \frac 1d \|\vec V^j_n(0)\|_{2}^{2} + \frac 1d \|\vec w^J_n(0)\|_{2}^{2} +o(1).}
Hence if $\|\na u_n(0)\|_2^2\lec\e_*^2$, then $\|\na V^j_n(0)\|_2^2\lec\e_*^2\ll 1$, and so $K(V^j_n(0))\ge 0$. Otherwise, the lower bound in \eqref{eq:Klower} is much bigger than $\e_*^2$, so for large $n$, we get from the above inequality 
\EQ{
  H(V^j_n(0)) <  J(W),}
which implies $K(V^j_n(0)) \ge 0$, by the variational property of $W$. The same argument implies $K(w^J_n(0))\ge 0$ as well. Thus, each component has non-negative energy $E$. We may assume that $j=1$ gives the maximum among $E(\vec V^j_n(0))$, and so 
\EQ{ \label{LP_2 ene bd}
 E(\vec V^j_n(0))<\frac{2}{3}J(W),\pq (j>1).}

Now let $U^j$ be the nonlinear profile associated with $V^j_n$, that is the nonlinear solution satisfying as $n\to\I$, 
\EQ{ \label{eq:UjVj}
 \|\vec U^j(s^j_n)-U(s^j_n)\vec \fy^j\|_2 \to 0, \pq s^j_n:=\lam^j_n t^j_n,}
defined uniquely around $t=s^j_\I:=\lim_{n\to\I}s^j_n$, such that 
\EQ{
 \|\vec U^j_n(0)-\vec V^j_n(0)\|_2\to 0\pq \vec U^j_n(t):=(T^j_n\vec U^j)(\lam^j_n(t+t^j_n)).}
By the scaling invariance of the equation, each $U^j_n$ is also a solution, defined locally around $t=0$. Hence the above property of $\vec V^j_n(0)$ is transferred to $U^j_n$: 
\EQ{
 \pt K(U^j_n(0))\ge 0, \pq 0\le E(\vec U^j_n)=E(\vec U^j) \sim \|\vec U^j_n(0)\|_2^2,
 \pr \sum_{j=1}^J E(U^j) \lec J(W), \pq \sup_{j>1}E(\vec U^j)\le \frac{2}{3}J(W),}
and so, by \cite{KM2}, each $U^j$ for $j>1$ exists globally and scatters with 
\EQ{
 \sum_{j=2}^J \|U^j\|_{L^q_{t,x}(\R)}^2 \lec 1.}
Note that only a bounded number of profiles can escape from the small energy scattering theory, where all Strichartz norms are bounded by the energy norm. 

Now assume the same for $U^1$ and thus for all $j\ge1$, which is the case if $E(U^1)<J(W)$. 
Then from the long-time perturbation theory, cf.~Theorem~2.20 in~\cite{KM2}, 
one obtains the {\em nonlinear profile decomposition}  for the solutions $u_{n}(t)$, provided $J$ is large and fixed,
and $n\ge n_0(J)$ is sufficiently large: 
\EQ{\label{eq:nonlinearBG}
 \pt u_{n} = \sum_{j=1}^{J} U^j_n+ w^J_n  + R^J_n,
 \pq \lim_{J\to\I}\limsup_{n\to\I} \|\vec R^J_n\|_{(L^\I_t\HH\cap L^q_{t,x})(\R)}= 0, }  
which implies $u_n$ is bounded in $L^q_{t,x}$, contradicting \eqref{eq:unseq}. Thus we have obtained 
\EQ{
 \| U^1\|_{L^q_{t,x}(\R)}=\I, \pq J(W)\le E(U^1)\le E_*, \pq \sum_{j=2}^JE(U^j)+\|\vec w^J_n\|_2^2 \lec \e_*^2.}

We now distinguish three cases (a)--(c) by means of $s^1_\I=\lim_{n\to\I}\lam^1_nt^1_n$: 

\medskip\noindent{\bf (a)} $s^1_\I=\I$. Then by definition \eqref{eq:UjVj}, $U^1$ is a local solution around $t=\I$ with finite Strichartz norms, and 
\EQ{
 \|U^1_n\|_{L^q_{t,x}(0,\I)}=\|U^1\|_{L^q_{t,x}(s^1_n,\I)}\to 0.}
Hence we can use the long-time perturbation argument on $(0,\I)$, which gives a contradiction via~\eqref{eq:nonlinearBG} as above. 

\medskip \noindent{\bf (b)} $s_{1,\I}=-\I$.  In this case $U^1$ scatters at $t=-\I$ by definition. Let $I=(-\I, T_{+})$ be the maximal interval of existence of $U^1$.  

If $d_\calS(U^1(t))>\de_*/2$ for all $t<T_+$, then $U^1$ remains in $\HH_X$ with $\Sg=+1$ from $t=-\I$. Hence $T_+=\I$ by Proposition \ref{prop:Sg+ global}, and $\|U^1\|_{L^q_{t,x}(0,\I)}=\I$. Moreover, the one-pass theorem \ref{thm: onepass} together with the ejection lemma \ref{lem: ejection} implies that $d_\calS(U^1(t))\ge\de_*$ for large $t$. Hence $U^1$ is a critical element after some time translation. 

Otherwise, $d_{\calS}(U^1(t_{*})) = \de_*/2$ at some minimal $t_{*}<T_{+}$, until which $U^1$ remains in $\HH_X$ with $\Sg=+1$, and $\| U^1 \|_{L^q_{t_x}(-\I,t_{*})} <\I$. 
Hence one can apply the nonlinear profile decomposition on the interval $\lam^1_n(t+t^1_n)\le t_*$, which yields in particular 
\EQ{\label{eq:dSest}
 d_{\calS}(\vec u_{n}((t_{*}-s^1_n)/\lam^1_n)) 
 \le d_{\calS}(\vec U_{1}(t_{*})) + O(\e_*) + o(1) \le \frac{2}{3}\de_* +o(1),}
as $n\to\I$, provided $J$ is large enough.  However, since $t_{*}-s^1_n\to\I$, this contradicts our assumption $\inf_{t\ge 0}d_{\calS}(\vec u_{n}(t))\ge \de_*$.  
 To obtain the $O(\eps_{0})$-term in~\eqref{eq:dSest}, one uses the bound, valid for $J$ large and all $n\ge n_{0}$, 
\EQ{
 \sup_{\lam^1_n(t+t_n) \le t_{*}} \|\vec R^J_n(t) \|_2 \lec \eps_*}
which follows from the main estimate of Theorem~2.20 in~\cite{KM2}. 

\medskip \noindent{\bf (c)} $s^1_\I\in\R$. 
Let $(T_-,T_+)\ni s^1_\I$ be the maximal interval of existence for $U^1$. 
We know that $K(U^1(s^1_\I))\ge0$. 
Moreover, by the same perturbative arguments as above, the nonlinear profile decomposition~\eqref{eq:nonlinearBG} holds on $(T_-,T_+)/\lam^1_n-t^1_n$. 
Thus, as in the case (b), we deduce from $\inf_{t\ge 0}d_{\calS}(\vec u_{n}(t))\ge\delta_*$ that 
\EQ{
 \inf_{s^1_\I\le t<T_+} d_{\calS}(\V U^1(t)) \ge \de_*/2.}
Then the same argument as in (b) implies that $T_+=\I$ and $U^1$ is a critical element after time translation, provided that $\|U^1\|_{L^q_{t,x}(s^1_\I,\I)}=\I$. Otherwise $U^1$ scatters and the nonlinear profile decomposition holds on $[0,\I)$, contradicting \eqref{eq:unseq}. 

\medskip
Thus we arrive at the conclusion that $s^1_\I<\I$ and $U^1$ is a critical element after time translation. 
This implies $E(U^1)=E_*$ by the minimality, which extinguishes the other profiles $U^j$ ($j>1$) as well as the remainder $w^J_n$ as $n\to\I$, through the nonlinear energy decomposition. 

Having a critical element $u_*$, we apply the above argument to the sequence 
\EQ{
 u_n(t)=u_*(t-t_n), \pq t_n\to\I.}
The vanishing of all but one profile implies that for some continuous $\lam(t)>0$ 
\EQ{\label{eq:compact}
 \{\lam(t)^{-d/2}\vec u_*(t,x/\lam(t))\}_{t\ge 0} \subset \HH}
is precompact, concluding the first part of the proof. 

\medskip\noindent{\bf Part II}: Exclusion of a critical element. 

Let $u_*$ be a critical element \eqref{def crit}, hence 
\EQ{
 \vec w_*(t):=\varrho(t)^{d/2}\vec u_*(t,\varrho(t)x), \pq \varrho(t):=1/\lam(t)} 
for $t\ge 0$ is precompact in $\HH$. We proceed in three steps. 

\medskip\noindent{\bf Step 1}: $\limsup_{t\to\infty}\varrho(t)/t < \infty$.
To see this, note that by finite propagation speed, we have 
\EQ{
\lim_{R\to\infty}\sup_{t\geq 0} \|\V u_*(t)\|_{L^2(|x|> t+R)} = 0,}
whence we have 
\EQ{
\lim_{R\to\infty}\sup_{t\geq 0} \|\V w_*(t)\|_{L^2(|x|> (t+R)/\varrho(t))} = 0.}
If for some sequence of times $\{s_n\}_{n\geq 1}$ we had $\varrho(s_n)/s_n\to 0$, then by pre-compactness of $\{\V w_*(t)\}_{t\geq 0}$, we get $\|\vec w_*(s_n)\|_{L^2}\to 0$, whence also $\|\vec u_*(s_n)\|_{L^2}\to 0$, which would force $E_* = 0$, a contradiction. 

\medskip\noindent{\bf Step 2}: $\liminf_{t\to \infty}\varrho(t)/t >0$. 
This follows from the localized virial identity \eqref{eq:virial1} as in the proof of Theorem \ref{thm: onepass}. By the precompactness, there is $R>0$, depending on $u_*$, such that for all $t\ge 0$
\EQ{
 \int_{|x|>R\varrho(t)} |\dot u_*|^2+|\na u_*|^2 dx < \de.}
Suppose for contradiction that $\liminf_{t\to\I}\varrho(t)/t= 0$. 
Choose $T_3\gg T_2\gg 1$ and $\ta_2,\ta_3>0$ such that 
\EQ{
 \varrho(T_j)\ll \de T_j/R, \pq \ta_j=R\varrho(T_j).}
Then we have 
\EQ{
 \pt |\LR{wu_t|x\cdot\na u+\na\cdot xu}| + |\LR{wu_t|u}| \lec R\varrho(T_j) \ll \de_*T_j\pq (t=T_j,\ j=2,3),
 \pr \sup_{T_2<t<T_3}\Ext(t) \lec \max_{t=T_2,T_3}\Ext(t) < \de,}
where $w$ and $\Ext$ are as in \eqref{def w} and \eqref{def Eext}. Then we have in place of \eqref{equipartition}--\eqref{kinetic bd}, 
\EQ{
 \pt\int_{T_2}^{T_3}[\|\dot u\|_2^2-K(u(t))+O(\de_*)]dt \ll \de_*T_3,
 \pr\int_{T_2}^{T_3}[\de_*^{1/2}\|\na u(t)\|_2^2-O(\de_*)]dt \ll \de_*T_3,}
which leads to 
\EQ{
 |T_3-T_2|J(W)\le \int_{T_2}^{T_3}E(u)dt \ll \de_*^{1/2}T_3,}
a contradiction. Here again we assumed $E(u)\ge J(W)$ since in the other case one can easily get a simpler bound, as was done in \cite{KM1}.  

\medskip\noindent{\bf Step 3}: Construction of a blow up solution via re-scaling $u_*$. Pick a sequence $s_n\to \infty$ with 
$\lim_{n\to\infty}\varrho(s_n)/s_n = c\in (0, \infty)$, as well as $\V w_*(s_n)\to \exists\vec\fy$ in $L^2$. Define a sequence of solutions 
\EQ{
 u_n(t, x): = s_n^{d/2-1}u_*(s_n t, s_n x)}
whence we have $\V u_n(1)\to c^{-d/2}\vec\fy(x/c)$ in $L^2$. 

The above two steps imply that $\V u_n$ is precompact in $C([\ta,1];L^2)$ for any $0<\ta<1$, and so, after passing to a subsequence, it converges to some $\V u_\I$ in $C((0,1];L^2)$. By the local wellposedness theory, it has finite Strichartz norms locally in time, and so $u_\I$ is the unique strong solution on $(0,1]$ with the initial condition $\V u_\I(1)=\V\fy$. 
Clearly we also have $d_\calS(\V u_\infty(t))\geq \delta_*$ and $\Sg(\V u_\I(t))=+1$ for $0<t\le 1$. 

We now show that $u_\infty$ is a solution blowing up at $t=0$, which contradicts Proposition \ref{prop:Sg+ global}. 
The fact that $u_\infty$ blows up at $t=0$ follows from 

 {\bf Claim}: $u_\infty(t, x) = 0$ on $|x|>t$. 
To see this, pick $0<\eps\ll 1$ arbitrary, let $m$ large enough such that $\|\V w_*(s_m) - \V\fy\|_{L^2} \ll\eps$ and further pick $R>0$ such that $\|\V\fy\|_{L^2(|x|>R)}\ll \eps$. 
Then for $n>m$, we have 
\EQ{
 \|\V u_n(s_m/s_n)\|_{L^2(|x| > R\varrho(s_m)/s_n)} = \|w_*(s_m)\|_{L^2(|x|>R)}\ll \eps.}
From this and the finite propagation speed, we deduce that for $s_m/s_n\le t\le 1$ 
\EQ{ 
 \|\V u_n(t)\|_{L^2(|x|>R\varrho(s_m)/s_n+t-s_m/s_n)}\ll \eps.}
Letting $n\to \infty$, we infer that for $0<t\le 1$
\EQ{
 \|\V u_n(t))\|_{L^2(|x|>t)}\ll \eps.}
Since $\eps>0$ is arbitrary, this implies that $u_\infty$ is supported on $|x|\le t$, as claimed. This completes the proof of Proposition \ref{prop:Sg+ scatt}. 
\end{proof}

In order to complete the proof of Theorem~\ref{thm: Main}, we now exhibit open data sets at time $t=0$ such that we have blow up/scattering at $t=\pm\infty$, four possibilities in all. For this, we use the representation 
\EQ{ 
 u = W + v_1 = W + \mu_1(u)\rh + \ga_1,}
used in the proof of Lemma ~\ref{lem: ejection}, see \eqref{decompose at 1}. 
We pick data of the form 
\EQ{ 
 u(0) = W + a \rho + f,\pq \dot{u}(0) = b \rho + g,}
for some $a,b\in\R$, $f\in\dot H^1$ and $g\in L^2$ radial, with the conditions 
\EQ{\label{eq:conditions}
 \|\na f\|_{2} + \|g\|_{2} \ll |a|+|b| \ll \de_*.}
It then follows from the same argument as below \eqref{mu1 Duh} that we have 
\EQ{ 
 \pt \mu_1(t) = e^{k t}\mu_+ + e^{-k t}\mu_- +O(e^{2k|t|}(a^2+b^2)),
 \pr \|\V\ga_1(t)\|_2 \lec \LR{t}(|a|+|b|)+e^{2k|t|}(a^2+b^2),}
as long as $e^{k |t|}(|a|+ |b|)\lec \delta_H$, where $\delta_H$ is as in Lemma~\ref{lem: ejection}, and further 
\EQ{
 \mu_+ := \frac{1}{2}\left(a + \frac{1}{k} b\right),\pq 
 \mu_- := \frac{1}{2}\left(a - \frac{1}{k} b\right).}
Using the expansion of $K$ in \eqref{expand K} as well, it is now easy to see that under the conditions \eqref{eq:conditions} we obtain 4 disjoint open sets, depending on the signs of $a$ and $b$, such that $K(u)\lessgtr 0$ at the ejection times, i.e.~ the endpoints of the time interval around $0$ where $d_\calS(\vec u)\le\de_H$. This completes the proof of Theorem~\ref{thm:  Main}. 
\qedsymbol


\begin{thebibliography}{10}

\bibitem{BaG} Bahouri, H.,  G\'erard, P. {\em  High frequency approximation of solutions to critical nonlinear wave equations.}  Amer.\  J.\ Math.\ {\bf 121}  (1999),  no.~1, 131--175.

\bibitem{CGS} Caffarelli, L.,  Gidas, B., Spruck, J. {\em Asymptotic symmetry and local behavior of semilinear elliptic equations with critical Sobolev growth. }  Comm. Pure Appl. Math.  {\bf 42}  (1989),  no. 3, 271--297.

\bibitem{DKM1} Duyckaerts, T., Kenig, C., Merle, F. {\em Universality of blow-up profile for small radial type II blow-up solutions of energy-critical wave equation,} preprint, arXiv:0910.2594. 

\bibitem{DKM2}  Duyckaerts, T., Kenig, C., Merle, F.  {\em Universality of the blow-up profile for small type II blow-up solutions of energy-critical wave equation: the non-radial case}, preprint, arXiv:1003.0625. 

\bibitem{DM1}  Duyckaerts, T.,  Merle, F. {\em Dynamic of threshold solutions for energy-critical NLS.} 
Geom.\ Funct.\ Anal.~{\bf 18}  (2009),  no.~6, 1787--1840.

\bibitem{DM2} Duyckaerts, T., Merle, F.  {\em Dynamic of threshold solutions for energy-critical wave equation.} Int.\ Math.\ Res.\ Pap.~IMRP {\bf 2008}. 

\bibitem{GSV}
Ginibre, J., Soffer, A., Velo, G. {\em The global {C}auchy problem for the
  critical non-linear wave equation}, J. Funct. Anal. {\bf 110} (1992),
  96--130.

\bibitem{ScatBlow}
Ibrahim, S., Masmoudi, N., Nakanishi,  K.  {\em Scattering threshold for the focusing nonlinear Klein-Gordon equation}, preprint, arXiv:1001.1474, to appear in Analysis \& PDE. 

\bibitem{KM1}
Kenig, C., Merle, F. 
\textit{Global well-posedness, scattering, and blow-up for the energy-critical
focusing nonlinear Schr\"odinger equation in the radial case},
Invent.\ Math.\ {\bf 166} (2006), no.~3, pp.\ 645--675.


\bibitem{KM2} Kenig, C., Merle, F. {\em Global well-posedness, scattering and blow-up for the energy-critical focusing non-linear wave equation.}  
Acta Math.\ {\bf 201}  (2008),  no.~2, 147--212.

\bibitem{KS} Krieger, J., Schlag, W. {\em On the focusing critical semi-linear wave equation.}  Amer.\ J.\ Math.~{\bf 129}  (2007),  no.~3, 843--913.

\bibitem{KST} Krieger, J.,  Schlag, W., Tataru, D. {\em Slow blow-up solutions for the $H^1(\R^3)$ critical focusing semilinear wave equation.}  Duke Math.\ J.~{\bf 147}  (2009),  no.~1, 1--53.

\bibitem{MeV} Merle, F., Vega, L. {\em Compactness at blow-up time for $L^2$ solutions of the critical nonlinear Schr\"odinger equation in 2D.} Internat.\ Math.\ Res.\ Notices  1998,  no.~8, 399--425.

\bibitem{CNLKG} 
Nakanishi,  K.  {\em Scattering theory for the nonlinear Klein-Gordon equation with Sobolev critical power.} Internat.~Math.~Res.~Notices {\bf 1999}, no.~1, 31--60. 

\bibitem{NakS} Nakanishi, K., Schlag, W. {\em Global dynamics above the ground state energy for the focusing nonlinear Klein-Gordon equation}, preprint, arXiv:1005.4894. 

\bibitem{NakS2} Nakanishi, K., Schlag, W. {\em Global dynamics above the ground state energy  for the  cubic NLS equation in 3D}, preprint, arXiv:1007.4025. 


\bibitem{ShSt} Shatah, J., Struwe, M. {\em Geometric Wave Equations}, Courant Lecture Notes, AMS, 1998. 

\end{thebibliography}
\end{document}